\newtheorem{theorem}{Theorem}
\newtheorem{theoremn}{Theorem}[section] 
\newtheorem{lemman}[theoremn]{Lemma}
\newtheorem*{theorem*}{Theorem}
\newtheorem*{lemma*}{Lemma}
\theoremstyle{definition}
\newtheoremstyle{definition*}
{\topsep}
{\topsep}
{}
{0pt}
{\bfseries}
{.}
{ }
{\thmname{#1}\thmnumber{ #2}\thmnote{ (#3)}}
\theoremstyle{definition*}
\newtheorem*{definition*}{Definition}
\newtheorem*{remark*}{Remark}
\newtheorem*{claim*}{Claim}
\newtheorem*{corollary*}{Corollary}
\begin{document}
	\baselineskip 13pt
	
	\begin{abstract}   Let $G$ and $A$ be groups where $A$ acts on $G$ by automorphisms. We
		say ``\textit{the action of $A$ on $G$ is good}" if the equality $%
		H=[H,B]C_H(B)$ holds for any subgroup $B$ of $A$ and for any $B$-invariant
		subgroup $H$ of $G$. It is straightforward that every coprime action is a good action. In the present work we extend some results due to Ward, Gross, Shumyatsky, Jabara, and Meng and Guo under coprime action to good action. 
	\end{abstract}
	
	\title{extensions of several coprime results\\ to good action case}
	\subjclass[2000]{20D10, 20D15, 20D45}
	\keywords{good action, Fitting height, fixed point free
		action, supersolvable, $p$-nilpotent}
	\author{G\"{u}l\. In Ercan$^{*}$}
	\address{G\"{u}l\. In Ercan, Department of Mathematics, Middle East
		Technical University, Ankara, Turkey}
	\email{ercan@metu.edu.tr}
	\thanks{$^{*}$Corresponding author}
	\author{\.{I}sma\. Il \c{S}. G\"{u}lo\u{g}lu}
	\address{\.{I}sma\. Il \c{S}. G\"{u}lo\u{g}lu, Department of Mathematics, Do%
		\u{g}u\c{s} University, Istanbul, Turkey}
	\email{iguloglu@dogus.edu.tr}
	\author{Enrico Jabara}
	\address{Enrico Jabara, Dipartimento di Filosofia, Universit\`a Ca' Foscari
		di Venezia, Venice, Italy}
	\email{jabara@unive.it}
	
	\maketitle
	
	\section{Introduction}
	
	Throughout all groups are finite, and the notation is standard. Let a group $A$ act on the group $G$ by automorphisms. We
	say ``\textit{the action of $A$ on $G$ is good}" if the equality $%
	H=[H,B]C_H(B)$ holds for any subgroup $B$ of $A$ and for any $B$-invariant
	subgroup $H$ of $G$. This concept is introduced in \cite{egj} as a generalization of coprime action, namely, the case where $(|G|,|A|)=1$. As the first work on good action, \cite{egj} is essentially devoted to  extensions of some coprime results due to Turull obtained in \cite{tur} and \cite{turull}. In the present paper we emphasize the importance of ``good action" once more by extending to good action case the main results of \cite{gross}, \cite{wa}, \cite{shum}, \cite{ja}, \cite{guo} which are proven under the coprimeness assumption. The key result leading to these new observations is the following.
	
	\begin{lemman}\label{lem1}
		Let $R$ be an $r$-group and let $A$ be a noncyclic abelian $p$-group acting
		faithfully on $R$. If this action is good, then 
		\begin{equation*}
			R= \big \langle \, C_R(a)\,\,:\,\,1\ne a\in A\,\, \big \rangle.
		\end{equation*}
	\end{lemman}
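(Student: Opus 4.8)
The plan is to argue by a double induction: first on $|A|$, to reduce to the case $A\cong C_p\times C_p$, and then, within that case, on $|R|$, to reduce to $R$ elementary abelian. Before starting I would record two consequences of the good action hypothesis which are automatic under coprimeness but here must be extracted from the definition. The good action is inherited by every subgroup $B\le A$ acting on $R$ (the defining conditions for $B$ are a subset of those for $A$) and by every quotient $R/N$ with $N$ an $A$-invariant normal subgroup (apply $H=[H,B]C_H(B)$ to preimages). Crucially, for such $N$ and any $B\le A$ one has the centralizer--quotient compatibility $C_{R/N}(B)=C_R(B)N/N$: letting $H$ be the preimage of $C_{R/N}(B)$, good action gives $H=[H,B]C_H(B)$, and since $H/N$ is $B$-centralized we get $[H,B]\le N$, whence $C_{R/N}(B)=\overline{C_H(B)}\le\overline{C_R(B)}$, the reverse inclusion being trivial (bar denoting reduction modulo $N$). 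This identity is exactly the point where ``good'' replaces ``coprime''.

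For the reduction on $|A|$: if $A$ is noncyclic abelian of order $>p^2$, I would show it contains a \emph{noncyclic} maximal subgroup $M$ — indeed the only noncyclic abelian $p$-group all of whose maximal subgroups are cyclic is $C_p\times C_p$. Since $M$ acts faithfully via a good action with $|M|<|A|$, the induction hypothesis gives $R=\langle C_R(a):1\ne a\in M\rangle$, which lies in the target subgroup. Thus it suffices to treat $A\cong C_p\times C_p$, and here I would argue for an arbitrary, not necessarily faithful, good action, since this is what the inner induction needs. Writing $A_1,\dots,A_{p+1}$ for the subgroups of order $p$ and noting $C_R(A_i)\le C_R(a)$ for $1\ne a\in A_i$, the statement reduces to proving $R=\langle C_R(A_i):1\le i\le p+1\rangle=:Y$.

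For the reduction on $|R|$: if $\Phi(R)\ne1$, I would pass to $\bar R=R/\Phi(R)$. Good action is inherited, so by induction $\bar R=\langle C_{\bar R}(A_i)\rangle$, and by the compatibility above $C_{\bar R}(A_i)=\overline{C_R(A_i)}$; hence $R=Y\,\Phi(R)$, and the non-generator property of $\Phi(R)$ forces $R=Y$. It remains to treat $\Phi(R)=1$, i.e. $R=V$ an elementary abelian $r$-group viewed as an $\mathbb F_rA$-module, and here the analysis splits on whether $r=p$. If $r\ne p$ the action is coprime, $V$ is completely reducible, and for each irreducible submodule $W$ the image of $A$ in $\operatorname{Aut}(W)$ lies in the cyclic multiplicative group of the endomorphism field $E=\operatorname{End}_{\mathbb F_rA}(W)$, so the kernel is nontrivial (as $A$ is noncyclic); thus $W\le C_V(A_i)$ for some $i$ and $V=\sum W\le Y$. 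If $r=p$, then $a-1$ is nilpotent on $V$ for each $1\ne a\in A$, while good action gives $V=[V,a]+C_V(a)=(a-1)V+\ker(a-1)$; by rank--nullity this sum is direct, so $a-1$ is injective on $(a-1)V$, forcing $(a-1)V=(a-1)^2V=\cdots=0$. Hence $A$ acts trivially, $C_V(A_i)=V$, and $R=Y$.

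The main obstacle, and the crux of the argument, is the centralizer--quotient compatibility $C_{R/N}(B)=C_R(B)N/N$: without it the inner induction on $|R|$ breaks down, and it is precisely this identity, not coprimeness, that good action supplies. The elementary abelian case is the second delicate point: the unipotent computation shows a good $p$-action on an $\mathbb F_p$-module can only be trivial, which cleanly isolates the genuine content (the coprime module case) from the degenerate one and makes the whole reduction go through without ever invoking faithfulness.
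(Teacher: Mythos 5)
Your proof is correct, but it is a genuinely different, self-contained route: the paper disposes of this lemma in two sentences, citing Proposition 2.5 of \cite{egj} for the case $r=p$ (a good action of a $p$-group on a $p$-group is trivial) and the classical coprime generation result for $r\ne p$. Your top-level case split is the same, but you reprove both halves from scratch via a double induction, and the ingredient you isolate --- the identity $C_{R/N}(B)=C_R(B)N/N$ for good actions --- is precisely the lift that lets the Frattini-quotient reduction (and hence the standard coprime argument) run without coprimeness; the paper never makes this explicit here. Your unipotent computation ($V=(a-1)V\oplus\ker(a-1)$ by rank--nullity, forcing $(a-1)V=(a-1)^2V=\cdots=0$) is a clean substitute for Proposition 2.5 of \cite{egj}, and in fact you only need triviality on the elementary abelian Frattini quotient rather than on all of $R$, which your induction supplies. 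What your approach buys is independence from \cite{egj} and from the ``well known'' coprime lemma; what it costs is length. One cosmetic slip: in the reduction on $|A|$ you assert that the noncyclic maximal subgroup $M$ ``acts faithfully,'' which need not hold; since your base case and the conclusion itself never use faithfulness, the fix is simply to state and induct on the non-faithful version throughout, as you in effect already do.
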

	
	\begin{proof}
		Observe that the action is trivial by Proposition 2.5 of \cite{egj} when $r=p$ and the claim follows. The
		result is well known in case where $r\ne p.$
	\end{proof}
	
	The following result can be regarded as the main theorem of this paper. It generalizes \cite{shum} to the case of a good action the proof of which is partially independent of the method used in \cite{shum}.  \\
	
\begin{theorem}\label{A}
	Let $p$ be a prime, $n$ a positive integer. Suppose that $G$ is a finite 
		solvable group acted on by an elementary abelian $p$-group $A$ with $|A|\geq
		p^{n+1}$.  If this action is good and $C_G (a)$ is of Fitting height at most 
		$n$ for every  nontrivial element $a$ of $A$ then $G$ is of Fitting  height
		at most $n + 1.$ Moreover, if $|A|\geq p^{n+2}$ then $G$ is of Fitting  height
		at most $n.$
		\end{theorem}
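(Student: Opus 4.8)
The plan is to argue by induction on $|G|$, setting up a minimal counterexample $(G,A)$ to the first assertion and using the good-action identity $H=[H,B]C_H(B)$ together with Lemma~\ref{lem1} as substitutes for the coprime tools of \cite{shum}. First I would record the elementary reductions. If some $1\ne a\in A$ acted trivially, then $C_G(a)=G$ would force $h(G)\le n$, so \emph{$A$ acts faithfully}. Since good action is inherited by $A$-invariant subgroups and sections (as established in \cite{egj}), every $A$-invariant section of $G$ again carries a good $A$-action, and by Proposition~2.5 of \cite{egj} the $p$-group $A$ \emph{centralizes every $A$-invariant $p$-subgroup} of $G$; in particular $O_p(G)\le C_G(A)$, so the genuinely nontrivial part of the action takes place on the $p'$-sections, exactly where Lemma~\ref{lem1} applies.

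Next I would reduce to a transparent top configuration. Using $h(G)=1+h(G/F(G))$ and the fact that $C_G(F(G))\le F(G)$ for solvable $G$, I would pass to a quotient in which $V:=F(G)$ is an elementary abelian $q$-group, self-centralizing, with $G=VH$ and $h(H)=h(G)-1$; after the previous paragraph we may take $q\ne p$. If $A$ acts on $V$ through a quotient of order at most $p$, then $C_A(V)$ has rank $\ge n$ and I would finish by applying the inductive statement to its action on a proper $A$-invariant subgroup; otherwise $A$ acts noncyclically on $V$ and Lemma~\ref{lem1} yields $V=\langle C_V(a):1\ne a\in A\rangle$. For each $a$, the subgroup $C_V(a)=V\cap C_G(a)$ is a normal nilpotent subgroup of $C_G(a)$, hence $C_V(a)\le F(C_G(a))$, and the good identity gives $G=[G,a]C_G(a)$ with $G/[G,a]$ a section of $C_G(a)$ of Fitting height $\le n$.

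The heart of the argument is to show that a height-$(n+1)$ action of $A$ on $H\cong G/V$ (which is what a counterexample produces) is incompatible with every $C_G(a)$ having height at most $n$. I would do this by counting over $A$: because $|A|\ge p^{n+1}$, the generation $V=\langle C_V(a)\rangle$ lets me distribute the $n+1$ Fitting layers of $H$ among the centralizers, producing some $a$ for which $C_G(a)$ carries a section of height $n+1$ assembled from $C_V(a)$ and the matching layers of $C_H(a)$, a contradiction. This is the step I would model on the module-theoretic, coprimeness-free part of \cite{shum} and on the Hall--Higman and Turull height bounds, which concern solvable group and representation structure rather than arithmetic and so transfer to the good-action setting intact. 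The \emph{moreover} statement I would obtain from the same scheme: with $|A|\ge p^{n+2}$ I fix a hyperplane $A_0$ of rank $\ge n+1$, apply the already-proved first assertion to the $A_0$-action, and then use the extra generator, via $G=[G,a]C_G(a)$, to force the top layer to collapse, lowering the bound to $n$.

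The step I expect to be the main obstacle is precisely this distribution step, together with keeping the centralizer hypothesis intact under the reductions. Under coprime action one freely uses $C_{G/N}(a)=C_G(a)N/N$ and clean $\langle a\rangle$-decompositions, both of which may fail for a good action; so the delicate point is to ensure that passing to $G/V$, and to the subgroups $[G,a]$ and $C_G(a)$, does not inflate the Fitting heights of the centralizers beyond $n$. Controlling this inflation is exactly where the inheritance of good action to sections and the triviality of good $p$-actions on $p$-groups (Proposition~2.5 of \cite{egj}) must be invoked repeatedly, and it is the only place where the good-action hypothesis does real work beyond the generation supplied by Lemma~\ref{lem1}.
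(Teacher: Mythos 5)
The reductions in your first two paragraphs are broadly sound and match the spirit of the paper (faithfulness, triviality of the action on $p$-subgroups via Proposition 2.5 of \cite{egj}, generation of $p'$-sections via Lemma \ref{lem1}). But there is a genuine gap exactly where you place ``the heart of the argument'': the claim that $V=\langle C_V(a):1\ne a\in A\rangle$ ``lets me distribute the $n+1$ Fitting layers of $H$ among the centralizers, producing some $a$ for which $C_G(a)$ carries a section of height $n+1$'' is not an argument, and no naive counting of this kind works. Knowing that $C_V(a)\le F(C_G(a))$ for each $a$ and that the $C_V(a)$ generate $V$ does not let you stack ``matching layers of $C_H(a)$'' on top of $C_V(a)$: the Fitting series of $C_G(a)$ need not align with the restriction to $C_G(a)$ of the Fitting series of $G$, and an element $a$ with $C_V(a)\ne 1$ need not act with nontrivial fixed points on any of the higher layers. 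This difficulty is precisely why Shumyatsky's theorem is nontrivial even in the coprime case. The paper replaces your distribution step by a concrete mechanism: it constructs an irreducible $A$-tower $S_1,\dots,S_h$ with $h=n+2$, proves $C_V(A)=0$ for the top Frattini-factor module $V$ (via Lemma \ref{lem22}, the good-action analogue of Shumyatsky's Corollary 1.3), proves $C_A(S_{h-1})=1$ together with coprimality of $|S_{h-1}|$ and $|A|$, and then runs a Clifford-theoretic analysis on homogeneous components that reduces to an extraspecial group $E$ with $C_C(E)=1$ acting on a module $U$ with $C_U(C)=0$; the final contradiction comes from Espuelas' regular orbit theorem for symplectic modules \cite{Esp}, not from any counting over $A$. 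None of this machinery (the tower, homogeneous components, the extraspecial reduction, the symplectic regular orbit input) appears in your sketch, and without it the contradiction is not obtained.

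Your treatment of the ``moreover'' clause is likewise not a proof as written: it is never explained how the ``extra generator'' of $A$ ``forces the top layer to collapse.'' The paper's argument here is a genuine counting argument, but it runs on the tower rather than on a hyperplane: setting $A_i=C_A(P_i)$, elements of $A_{i-1}\setminus A_i$ act fixed-point-freely on $P_i$ by irreducibility, so Lemma \ref{lem1} applied to $A_{i-1}/A_i$ forces $|A_{i-1}/A_i|\le p$, and $A_h=1$ because otherwise some centralizer would have Fitting height $h$; hence $|A|\le p^{h}=p^{n+1}<p^{n+2}$, a contradiction. That argument would replace your hyperplane scheme, but it presupposes the tower, which your proposal never constructs.
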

	
	The next result is obtained as an extension of Theorem 3.3 in \cite{gross}. It is achieved by applying the same argument
	as in \cite{gross} by the use of Theorem 4.5 of \cite{egj} and Lemma \ref{lem1} and Theorem \ref{A}.\\

	\begin{theorem}\label{B}
	Let $p$ be a prime. Suppose that $G$ is a finite 
		solvable group acted on by an elementary abelian $p$-group $A$ with $|A|\geq
		p^{4}$. If this action is good and $C_G (a)$ is supersolvable for every  nontrivial element $a$ in $A$ then $G$ is supersolvable.
		\end{theorem}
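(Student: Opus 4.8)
The plan is to argue by contradiction and to reproduce, step for step, the reduction used for Theorem~3.3 of \cite{gross}, inserting Theorem~\ref{A}, Lemma~\ref{lem1} and Theorem~4.5 of \cite{egj} at the places where \cite{gross} invokes coprimeness. Let $G$ be a counterexample of least order. First I would check that the hypotheses descend to $A$-invariant sections. The good action property is inherited by $A$-invariant subgroups directly from the defining equality, and by $A$-invariant quotients because, for $N$ normal and $A$-invariant, $H/N=[H/N,B]\cdot(C_H(B)N/N)$ whenever $H=[H,B]C_H(B)$. Moreover, writing $L=\{g\in G:[g,a]\in N\}$, the good action equality applied to $L$ with $B=\langle a\rangle$ gives $[L,a]\leq N$ and hence $C_{G/N}(a)=L/N=C_G(a)N/N$; thus each centralizer in a quotient is an epimorphic image of a supersolvable group and so supersolvable. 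Since the supersolvable groups form a saturated formation, minimality forces the standard configuration: $\Phi(G)=1$, $G$ has a unique minimal normal subgroup $V=F(G)=C_G(V)=O_q(G)$ for a prime $q$, and $G=VH$ with $H\cong G/V$ supersolvable and $V$ a faithful irreducible $\mathbb{F}_q[H]$-module. As the unique minimal normal subgroup, $V$ is $A$-invariant, and it suffices to reach the contradiction $\dim_{\mathbb{F}_q}V=1$.

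Next I would apply Theorem~\ref{A} with $n=2$. A supersolvable group has nilpotent commutator subgroup, hence Fitting height at most $2$, so $C_G(a)$ has Fitting height at most $2=n$ for every nontrivial $a\in A$. Since $|A|\geq p^4=p^{n+2}$, the second assertion of Theorem~\ref{A} yields $h(G)\leq 2$. In the configuration above $F(G)=V$, so $h(G)\leq 2$ says precisely that $H\cong G/F(G)$ is nilpotent. The problem is thereby reduced to a sharply constrained situation: a nilpotent group $H$ acts faithfully and irreducibly on $V$ over $\mathbb{F}_q$, the group $A$ acts on $G=VH$ with every $C_G(a)$ supersolvable, and one must deduce $\dim_{\mathbb{F}_q}V=1$.

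The core of the argument is this dimension count, and it is here that Lemma~\ref{lem1} replaces coprime fixed-point theory. Set $\bar A=A/C_A(V)$. If $q=p$, then the good action of the $p$-group $A$ on the $p$-group $V$ is trivial by Proposition~2.5 of \cite{egj} (as in the proof of Lemma~\ref{lem1}); then $V\leq C_G(a)$ for every nontrivial $a$, and one argues directly from the supersolvability of $C_G(a)$ and the $G$-irreducibility of $V$. If $q\neq p$, the action of $\bar A$ on $V$ is coprime, and, $\bar A$ being noncyclic, the module form of Lemma~\ref{lem1} gives $V=\big\langle\,C_V(\bar a)\,:\,1\neq\bar a\in\bar A\,\big\rangle$. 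Each $C_V(a)$ is a module for the supersolvable group $C_G(a)$, and I would exploit the supersolvability of the centralizers exactly as in \cite{gross} — through the nilpotency of $H$ supplied by Theorem~\ref{A} and through this covering of $V$ by the subspaces $C_V(a)$ — to force $V$ to be a single one-dimensional chief factor. Theorem~4.5 of \cite{egj} furnishes the supersolvability-detecting step that coprime fixed-point theory provides in \cite{gross}, and $\dim_{\mathbb{F}_q}V=1$ contradicts the assumption that $G$ is not supersolvable.

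The step I expect to be the main obstacle is the rank bookkeeping that makes the hypothesis $|A|\geq p^4$ exactly sufficient. One must ensure that after factoring out $C_A(V)$ enough rank survives for Lemma~\ref{lem1} to apply, so that $\bar A$ is noncyclic, and, more delicately, that the supersolvable constraint on the various $C_G(a)$ propagates across a covering family of proper subgroups (hyperplanes) of $A$ so as to pin the dimension down to $1$ rather than merely to bound it. Verifying that the good action delivers precisely the fixed-point covering that coprimeness delivers in \cite{gross}, while keeping track of how the passages to $\bar A$ and to the $C_H(a)$ consume the rank of $A$, is where the transplanted argument demands the most care.
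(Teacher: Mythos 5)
Your overall frame (minimal counterexample, saturated-formation reduction to $G=F(G)R$ with $F(G)=O_q(G)$ self-centralizing, Theorem~\ref{A} with $n=2$ to make $G/F(G)$ nilpotent, Lemma~\ref{lem1} for the fixed-point covering) matches the paper's proof. But the proposal stops exactly where the theorem's content begins: the passage from ``nilpotent $R$ acting faithfully and irreducibly on $V$'' to ``$R$ is abelian of exponent dividing $q-1$'' is waved at (``exploit the supersolvability of the centralizers exactly as in \cite{gross}'') rather than carried out, and you yourself flag the rank bookkeeping as an unresolved obstacle. That bookkeeping is not a technicality; it is the proof. The paper resolves it in three concrete steps that are absent from your proposal. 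First, set $C=C_A(R)$: if $C$ were noncyclic, Lemma~\ref{lem1} would give $C_{F(G)}(a)\neq 1$ for some $1\neq a\in C$; this subgroup is $RA$-invariant and normal in $G=F(G)R$, hence equals $F(G)$ by minimality, whence $G=C_G(a)$ is supersolvable. So $C$ is cyclic and a complement $B$ to $C$ in $A$ has order at least $p^{3}$ --- this is where $|A|\geq p^4$ is spent. Second, for each $1\neq b\in B$ the subgroup $F(G)C_R(b)$ is a proper $A$-invariant subgroup (proper because $b\notin C_A(R)$), hence supersolvable by minimality, forcing $C_R(b)$ to be abelian of exponent dividing $q-1$.

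Third --- and this is the genuinely missing ingredient --- one needs a separate result to pass from ``every $C_R(b)$ is abelian'' to ``$R$ is abelian.'' The paper proves this as Lemma~\ref{lem31}: if an elementary abelian $p$-group of order at least $p^{3}$ acts well on a solvable $G$ with all centralizers abelian, then $G$ is abelian. Its proof is not a routine transplant: after reducing to $G'\leq Z(G)$ via Theorem~\ref{A}, it combines the three subgroups lemma with the good-action factorization $C_G(y)=[C_G(y),\langle x\rangle]\,C_{C_G(y)}(x)$ to show $[C_G(x),C_G(y)]=1$ for the generating centralizers. Nothing in your proposal supplies this step or an equivalent of it, and without it the covering $R=\langle C_R(b):1\neq b\in B\rangle$ only tells you $R$ is generated by abelian subgroups of exponent dividing $q-1$, which does not bound $\dim_{\mathbb{F}_q}V$. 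You should isolate and prove the analogue of Lemma~3.2 of \cite{gross} before the main argument; once you have it, the exponent statement follows from the generation and the contradiction is immediate.
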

	
	Similar to Theorem \ref{B} we extend the main theorem of \cite{guo} as follows.\\
	
	\begin{theorem}\label{C}  Let $p$, $r$ be two primes and suppose that an elementary abelian r-group $A$ of order $r^2$ acts  on a $p$-solvable group $G$ in such a way that $C_G(a)$ is $p$-nilpotent for each nonidentity $a\in A$. Then $G$ is $p$-nilpotent by $p$-nilpotent.
		\end{theorem}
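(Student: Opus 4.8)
The plan is to argue by induction on $|G|$, taking $G$ to be a minimal counterexample, and first to recast the conclusion in a form convenient for induction. Since $O_{p'}(N)$ is characteristic in any $p$-nilpotent normal subgroup $N$, one checks that the largest $p$-nilpotent normal subgroup of $G$ is $O_{p'p}(G)$, so ``$G$ is $p$-nilpotent by $p$-nilpotent'' is equivalent to ``$G/O_{p'p}(G)$ is $p$-nilpotent''; once the reduction $O_{p'}(G)=1$ has been made, this means exactly that $G/O_p(G)$ is $p$-nilpotent. I would also dispose immediately of the non-faithful case: if $A$ does not act faithfully, choose $1\ne a\in C_A(G)$, so that $G=C_G(a)$ is already $p$-nilpotent; hence we may assume $A$ acts faithfully.

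For the reductions, let $N$ be a minimal $A$-invariant normal subgroup of $G$; by $p$-solvability $N$ is an elementary abelian $q$-group. Good action passes to $A$-invariant sections, and by Theorem 4.5 of \cite{egj} centralizers behave well modulo such sections, so the induced action of $A$ on $G/N$ is again good with $C_{G/N}(a)=\overline{C_G(a)}$ $p$-nilpotent; thus $G/N$ is $p$-nilpotent by $p$-nilpotent by minimality. A short extension argument shows that when $q\ne p$ (so $N$ is a $p'$-group) this pulls back to the full conclusion for $G$; the analogous pull-back \emph{fails} for $q=p$, since there one only recovers a $p$-closed, not $p$-nilpotent, layer. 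Hence every minimal $A$-invariant normal subgroup is a $p$-group, $O_{p'}(G)=1$, and $P:=O_p(G)=F(G)$ is self-centralizing, $C_G(P)\le P$. It remains to prove that $\bar G:=G/P$ is $p$-nilpotent.

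The engine is good-action generation of $p$-sections. Applying Lemma \ref{lem1} with the two primes interchanged (the acting group being the $r$-group $A$ and the acted-on group a $p$-group), on any $A$-invariant $p$-section $S$ on which $A$ acts faithfully one gets $S=\langle C_S(a):1\ne a\in A\rangle$: when $r\ne p$ this is the classical coprime covering, and when $r=p$ the action is trivial by Proposition 2.5 of \cite{egj}, so the statement holds vacuously. Fixing an $A$-invariant Sylow $p$-subgroup and an $A$-invariant $p'$-Hall subgroup, I would use this generation to control a chief factor witnessing failure of $p$-nilpotency. If $\bar G$ is not $p$-nilpotent, then $\ell_p(G)\ge 2$, so there is an $A$-invariant section $V\rtimes Q$ with $V$ a $p$-chief factor sitting in the second $p$-layer and $Q$ a nontrivial $p'$-group, coming from $O_{p'}(\bar G)$, acting nontrivially on $V$. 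Writing $V=\langle C_V(a)\rangle$ and using that each $C_G(a)$ is $p$-nilpotent — so that inside $C_G(a)$ no $p'$-group acts nontrivially across a $p$-layer — I would force $Q$ to centralize $V$, collapsing the second $p$-layer and contradicting $\ell_p(G)\ge 2$.

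The main obstacle is precisely this last, representation-theoretic, step: converting the statement ``$C_G(a)$ is $p$-nilpotent'' into pointwise information about the action of $Q$ on $V$, and then assembling the data over the $r+1$ subgroups of order $r$ in $A$ into a single global contradiction. This is the $p$-local, rank-two (the $n=1$) analogue of the Fitting-height bound of Theorem \ref{A}, and I expect to run the same Hall--Higman/Gross--Shumyatsky fixed-point bookkeeping as there, with the Fitting subgroup replaced by the $p$-nilpotent radical $O_{p'p}(G)$ and coprime generation replaced by Lemma \ref{lem1}; the hypothesis $|A|=r^2$ is exactly what forbids ``one $p'$-layer over a $p$-layer'' above $O_{p'p}(G)$, giving $p$-nilpotent length at most two. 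A secondary technical point is that where $A$ fails to act faithfully on a section the quotient $A/C_A(S)$ may be cyclic, so Lemma \ref{lem1} does not apply directly; there one passes to the faithful quotient and invokes the defining covering equality $H=[H,B]C_H(B)$ of good action by hand.
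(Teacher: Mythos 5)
Your reductions (faithfulness, $O_{p'}(G)=1$, passage to $G/N$ via Proposition 2.2 / Theorem 4.5 of \cite{egj}, and the reformulation as ``$G/O_{p'p}(G)$ is $p$-nilpotent'') are all sound, but the proof has a genuine gap exactly where you flag ``the main obstacle'': the step that forces the $p'$-group $Q$ to centralize the chief factor $V$. Generation $V=\langle C_V(a):1\ne a\in A\rangle$ together with $p$-nilpotency of each $C_G(a)$ only tells you that a $p'$-element $g\in C_G(a)$ acts trivially on $C_V(a)$; it says nothing about how $g$ acts on $C_V(b)$ for $b\notin\langle a\rangle$, and nothing at all about elements of $Q$ centralized by no nontrivial element of $A$. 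Assembling these fragments into $[Q,V]=1$ is precisely the content of the Meng--Guo theorem (Theorem 12 and Theorem A of \cite{guo}), which rests on a nontrivial module-theoretic argument and is not recovered by ``Hall--Higman/Gross--Shumyatsky bookkeeping'' in the form you describe. As written, your proposal amounts to re-proving the coprime theorem from scratch in the good-action setting, and the decisive step is asserted rather than proved.

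The paper avoids this entirely by a reduction you do not consider: Lemma \ref{lem41} shows that in the critical configuration $H=VG$ (with $V$ a faithful normal $p$-subgroup for $G$ and $C_V(a)\le C_V(g)$ for $p'$-elements $g\in C_G(a)$), the generation $V=\langle C_V(a)\rangle$ from Lemma \ref{lem1} forces $C_G(A)$ to be a $p$-group, whence either $p=r$ or $G$ is an $r'$-group. In the setting of Lemma \ref{lem42} (where $p\ne r$) this means $A$ acts \emph{coprimely}, so Theorem 12 of \cite{guo} applies verbatim, and Theorem \ref{C} then follows by rerunning the proof of Theorem A of \cite{guo} with Lemma \ref{lem42} in place of their Theorem 12. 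The fix for your argument is therefore not to push harder on the representation theory but to insert this coprimality observation: once $C_G(A)$ is shown to be a $p$-group, the whole problem collapses onto the known coprime result.
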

	
	Using the main theorem of \cite{egj} once more we observe that a result due to Ward \cite{wa} can also be extended to good action case. Namely, we prove\\
	
	\begin{theorem}\label{D}Let a noncyclic abelian $p$-group $A$ act on the group $G$ so that the
		action is good. Suppose that for any prime $q$ dividing $|C_G(A)|$, the $q$%
		-elements of $C_G(A)$ centralize the $q^{\prime}$-elements of $C_G(a)$ for
		all nonidentity elements $a\in A$. Then $G$ is solvable, $[G,A]$ is a $p^{\prime}$-group, $G=NP$ where $N$ is an $A$-invariant Hall
		$p^{\prime}$-subgroup of $G$ and $P$ is an $A$-invariant Sylow $p$-subgroup of
		$G$. Moreover, if $A$ is elementary abelian of order $p^{n}$, then $G$ is of Fitting height at most $n$.
		\end{theorem}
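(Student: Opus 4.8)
The plan is to reduce the statement to Ward's original coprime theorem \cite{wa} applied to the Hall $p'$-part of $G$, the only genuinely non-coprime behaviour being confined to the $p$-part, which I will show is centralized by $A$. Two observations drive everything. First, $C_G(A)$ is nilpotent: since $A$ is abelian we have $C_G(A)\le C_G(a)$ for every $a\in A$, so the $q'$-elements of $C_G(A)$ are among the $q'$-elements of $C_G(a)$, and the hypothesis therefore forces the $q$-elements of $C_G(A)$ to centralize the $q'$-elements of $C_G(A)$ itself, for every prime $q$; a finite group in which $q$-elements commute with $q'$-elements for all $q$ is the direct product of its Sylow subgroups, hence nilpotent. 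Second, because $A$ is a $p$-group and the action is good, Proposition~2.5 of \cite{egj} shows that $A$ acts trivially on every $A$-invariant $p$-subgroup of $G$ (good action is inherited by $A$-invariant subgroups, and a good action of a $p$-group on a $p$-group is trivial).

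First I would produce the decomposition $G=NP$. Using the main theorem of \cite{egj}, which makes good actions share the structural features of coprime ones, I obtain an $A$-invariant Sylow $p$-subgroup $P$ and an $A$-invariant Hall $p'$-subgroup $N$, together with the generation identity $G=\langle C_G(a):1\ne a\in A\rangle$ (the general form of Lemma~\ref{lem1}). By the second observation $[P,A]=1$, so $P\le C_G(A)$, and in particular $p$ divides $|C_G(A)|$ whenever $P\ne 1$. The action of $A$ on the $p'$-group $N$ is then coprime. Applying the hypothesis with $q=p$, the $p$-elements of $C_G(A)$ --- which are exactly $P$, since $C_G(A)$ is nilpotent with Sylow $p$-subgroup $P$ --- centralize the $p'$-elements of $C_G(a)$, and these contain $C_N(a)$ for each $1\ne a\in A$. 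Since $A$ is noncyclic and acts coprimely on $N$ one has $N=\langle C_N(a):1\ne a\in A\rangle$, whence $[P,N]=1$. Thus $P$ and $N$ normalize each other, $G=NP$, and $[G,A]=[N,A]$ is a $p'$-group contained in $N$.

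It remains to treat $N$. Restricting the hypothesis to $N$ gives precisely Ward's centralizer condition for the coprime action of the noncyclic abelian group $A$ on the $p'$-group $N$: for each prime $q$ dividing $|C_N(A)|$ the $q$-elements of $C_N(A)$ centralize the $q'$-elements of $C_N(a)$ for all $1\ne a\in A$. Hence Ward's theorem \cite{wa} applies to the pair $(N,A)$ and yields that $N$ is solvable; consequently $G=NP$ is solvable. For the final assertion, when $A$ is elementary abelian of order $p^{n}$ the coprime form of the result in \cite{wa} bounds the Fitting height of $N$ by $n$, and since $G=NP$ with $[P,N]=1$ and $P$ a $p$-group one has $h(G)=h(N)\le n$.

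The main obstacle is the structural reduction itself, that is, everything that must replace coprimeness in the first two paragraphs: establishing, for a good action, the existence of the $A$-invariant Sylow and Hall subgroups and the centralizer-generation identity, for which coprime tools such as Glauberman's lemma are unavailable and one must lean on the machinery of \cite{egj}. Once $G$ is split as $NP$ with $A$ centralizing $P$ and $P$ centralizing $N$, the problem becomes genuinely coprime on $N$ and Ward's argument carries over unchanged; the only remaining bookkeeping is to verify that the hypothesis restricts correctly to $N$ and that adjoining the centralized $p$-group $P$ does not raise the Fitting height.
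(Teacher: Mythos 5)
Your overall architecture is the right one and matches the paper's: show $C_G(A)$ is nilpotent, split off a centralized Sylow $p$-subgroup $P$, prove $[P,N]=1$ via the generation of the $p'$-part by centralizers $C_N(a)$, and then hand the genuinely coprime pair $(N,A)$ to Ward \cite{wa}. The second and third paragraphs of your argument (the commutation $[P,N]=1$, the restriction of the hypothesis to $N$, and the Fitting height bookkeeping) are essentially identical to the paper's.

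The gap is exactly at the point you flag as ``the main obstacle'' and then pass over: the existence of the Hall $p'$-subgroup $N$. At that stage of the argument $G$ is not known to be solvable, or even $p$-solvable, so Hall $p'$-subgroups need not exist at all ($A_5$ has no Hall $2'$-subgroup), and no theorem of \cite{egj} can produce one; the machinery there yields $A$-invariant Sylow subgroups and Hall subgroups only in the (p-)solvable setting. The paper closes this gap with a separate result (Theorem \ref{the53}): under Hypothesis$(p)$ with $A$ a $p$-group, $G$ has a \emph{normal} $p$-complement. Its proof is a minimal-counterexample argument using Frobenius' normal $p$-complement criterion \cite[Theorem 5.26]{Misaacs}, the key observation being that the offending $p$-subgroup $K$ lies in $P\leq C_G(A)$, hence is $A$-invariant, forcing $N_G(K)=G$ and eventually $P\leq Z(G)$. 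This is the real content that replaces coprimeness and it cannot be waved away. A second, smaller point: after obtaining $G=N\times P$ you quote Ward for the solvability of $N$; the paper instead re-derives solvability inside its induction (reducing to a characteristically simple $p'$-group and using Belyaev--Hartley together with Burnside's $r$-power-index criterion), citing \cite{wa} only for the final Fitting height bound. Your shortcut is legitimate provided Ward's theorem is stated for a noncyclic abelian $p$-group acting coprimely with exactly this centralizer hypothesis, but the normal $p$-complement step must be supplied before any of this can start.
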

	
	In the last section we prove the theorem below and give some further examples, namely, the extensions of the results of \cite{ja}.\\
	
	\begin{theorem}\label{E} Let a noncyclic abelian $p$-group
		$A$ of order $p^n$ act on the solvable group $G$ so that the
		action is good. Suppose that there exists a natural number $m$ such that $$%
		[C_G(a),\, C_G(A)]_{m}=[C_G(a),\underbrace{\, C_G(A),\ldots ,C_G(A)]}_{m-times}=1$$ for all $a\in A$. Then $G$ is of Fitting height at most $n$, and this bound is the best possible.
		\end{theorem}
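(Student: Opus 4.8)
The plan is to argue by induction on $n$, using the centralizers $C_G(a)$ as the carriers of the inductive hypothesis and Theorem~\ref{A} as the device that reassembles a global bound from local ones; write $h(\cdot)$ for Fitting height. The basic inheritance observation is the following. Since $A$ is abelian it normalises $H:=C_G(a)$, so $A$ acts on $H$, and $a$ itself lies in the kernel $K_a$ of this action because $a$ centralises $H$ by definition. Hence for $a\neq 1$ the group effectively acting on $H$ is $\bar A_a:=A/K_a$, of order at most $p^{\,n-1}$. Good actions restrict to invariant subgroups and pass to such quotients by the formalism of \cite{egj}, so $\bar A_a$ acts goodly on $H$; moreover $C_H(\bar A_a)=H\cap C_G(A)=C_G(A)$, and for any $\bar b\in\bar A_a$ lifting to $b\in A$ we have $C_H(\bar b)=C_G(a)\cap C_G(b)\subseteq C_G(a)$, whence
\[
[\,C_H(\bar b),\,C_H(\bar A_a)\,]_m=[\,C_H(\bar b),\,C_G(A)\,]_m\subseteq[\,C_G(a),\,C_G(A)\,]_m=1 .
\]
Thus $H$, acted on by $\bar A_a$, again satisfies the commutator hypothesis with the same $m$, and whenever $\bar A_a$ is noncyclic the induction hypothesis applies and yields $h(C_G(a))\le n-1$.

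Granting the local bound $h(C_G(a))\le n-1$ for every nontrivial $a$, the global conclusion $h(G)\le n$ would follow at once from Theorem~\ref{A} with its parameter set to $n-1$: the requirement $|A|\ge p^{(n-1)+1}=p^{n}$ is met exactly, and solvability together with the good action give $h(G)\le n$. So the whole argument reduces to establishing $h(C_G(a))\le n-1$ for all $a\neq 1$. Two points need care. First, Theorem~\ref{A} is phrased for elementary abelian $A$; for a general noncyclic abelian $A$ of order $p^{n}$ one must either route the assembly step through an elementary abelian section or bound $h(G)$ directly from the good-action factorisation $G=[G,A]\,C_G(A)$ together with Lemma~\ref{lem1}. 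I would do the latter via a minimal-counterexample reduction to $F(G)$ elementary abelian with $C_G(F(G))=F(G)$, applying Lemma~\ref{lem1} to $V=F(G)$ to write it as a product of the $C_V(a)$ and then controlling the action of each $C_{G/F(G)}(a)$ on the corresponding piece. Second, and more seriously, the recursion stalls when $\bar A_a$ is \emph{cyclic}.

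The cyclic case is the main obstacle, since neither Lemma~\ref{lem1} nor Theorem~\ref{A} applies to a cyclic acting group. When $\bar A_a$ is trivial the difficulty evaporates: then $C_G(a)=C_G(A)$ and the hypothesis specialises to $[\,C_G(A),C_G(A)\,]_m=\gamma_{m+1}(C_G(A))=1$, so $C_G(a)$ is nilpotent and $h(C_G(a))\le 1\le n-1$ as $n\ge 2$. When $\bar A_a$ is cyclic but nontrivial, say of order $p^{j}$ with $1\le j\le n-1$, I would argue separately: by Proposition~2.5 of \cite{egj} such an action is trivial on $p$-sections and coprime on $p'$-sections, the good-action factorisation gives $H=[H,\bar A_a]\,C_G(A)$, and the hypothesis $[\,C_H(\bar b),C_G(A)\,]_m=1$ forces $C_G(A)$ to act nilpotently on $H$. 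Feeding this into Thompson's theorem on fixed-point-free automorphisms of prime order to initiate, and running an induction on $j$ along a characteristic series of $H$, one bounds $h(H)$ by $j\le n-1$. Checking that the nilpotent action of $C_G(A)$ does not inflate the Fitting height beyond $j$ is the delicate heart of the matter, and it is the step I expect to absorb most of the work.

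For the base of the induction, $n=2$ forces $A$ elementary abelian of rank $2$, every $\bar A_a$ has order at most $p$, and the two subcases above give $h(C_G(a))\le 1$ for all $a\neq 1$, so the assembly step yields $h(G)\le 2$. Finally, to see that the bound $n$ is best possible I would exhibit, for each $n$, a coprime (hence good) fixed-point-free action of an elementary abelian group of order $p^{n}$ on a solvable group of Fitting height exactly $n$: when $C_G(A)=1$ the commutator hypothesis holds vacuously, so the classical extremal examples realising Fitting height $n$ under such an action, as in \cite{ja}, serve directly as witnesses of sharpness.
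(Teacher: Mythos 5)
Your strategy --- establish the local bound $h(C_G(a))\le n-1$ for every $1\ne a\in A$ and then assemble these via Theorem~\ref{A} --- is genuinely different from the paper's, but it contains two gaps that you flag yourself and do not close, either of which is fatal as written. First, Theorem~\ref{A} requires $A$ to be \emph{elementary} abelian, whereas Theorem~\ref{E} allows an arbitrary noncyclic abelian $p$-group of order $p^n$; for $A\cong\mathbb{Z}_{p^{n-1}}\times\mathbb{Z}_p$ with $n\ge 3$ no elementary abelian section of $A$ is large enough to run the assembly step with parameter $n-1$, and the alternative you gesture at (a minimal-counterexample reduction on $F(G)$) would amount to proving a non-elementary-abelian variant of Theorem~\ref{A} that the paper does not contain. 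Second, and more seriously, the case where $\bar A_a$ is cyclic is precisely where your recursion must still deliver the local bound and cannot: Lemma~\ref{lem1} gives no generation by fixed points for cyclic groups, and the assertion that a coprime cyclic group of order $p^j$ with nilpotent fixed-point subgroup ``acting nilpotently'' forces $h(H)\le j$ is itself a hard fixed-point theorem, not a consequence of Thompson's theorem plus an induction along a characteristic series. Note that this unproved case already occurs in your base case $n=2$: there $\bar A_a$ is cyclic of order $p$ whenever $K_a=\langle a\rangle$, and your scheme then needs every $C_G(a)$ to be nilpotent, which does not follow from $[C_G(a),C_G(A)]_m=1$ and is in any case a much stronger statement than the conclusion $h(G)\le 2$.

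The paper avoids both problems by arguing globally with an irreducible $A$-tower $S_1,\dots,S_h$ of height $h=h(G)$. The commutator hypothesis is used exactly once, to show $[P_1,A]\ne 1$: if $[P_1,A]=1$, then $S_1$ is covered modulo $T_1$ by $C_{S_1}(A)$, and since $P_2=\langle C_{P_2}(a):1\ne a\in A\rangle$ by Lemma~\ref{lem1} and each $C_{P_2}(a)$ is acted on nilpotently, hence (by coprimality of $p_1$ and $p_2$) trivially, by $C_{S_1}(A)$, one gets $[P_2,P_1]=1$, a contradiction. Choosing $a$ with $[P_1,a]\ne 1$, the paper reduces to the case where $G$ is a $p'$-group and invokes Theorem~3.1 of \cite{tur} to see that $C_{P_h}(a),\dots,C_{P_2}(a)$ is again a tower, now of height $h-1$ inside $C_G(a)$, and then inducts on the action of $A/\langle a\rangle$ on $\prod_{i\ge 2}C_{P_i}(a)$: the height drops by exactly one while $|A|$ drops by a factor of $p$, so no local bound on $h(C_G(a))$ and no appeal to Theorem~\ref{A} is ever needed. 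Your sharpness argument via coprime fixed-point-free actions with $C_G(A)=1$ is fine and is in fact more than the paper writes down.
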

	
	\section{Proof of Theorem A}
	
	\begin{lemman}\label{lem21}
		Let $A$ be an abelian group acting on the group $G$. Then for any proper subgroup $B$ of $A$ and for any $B$-invariant irreducible section $V$ of $G$ there exists $v\in V$ such that $C_B(V)=C_B(v)$, that is, $A$ acts with regular orbits on $G$.
	\end{lemman}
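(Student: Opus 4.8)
The plan is to reduce the statement to the elementary fact that an abelian group acting faithfully and irreducibly on a module realises that module as a one-dimensional vector space over a finite field, on which the group acts by scalar multiplication. Since in all our applications $G$ is solvable, every irreducible section $V$ is an elementary abelian $q$-group for some prime $q$, which I regard as an $\mathbb{F}_q$-module for the group it induces. First I would pass to the faithful quotient: set $\bar{B}=B/C_B(V)$, so that $\bar{B}$ is an abelian group acting faithfully on $V$, and observe that finding $v$ with $C_B(v)=C_B(V)$ is exactly the same as finding $v\in V$ whose stabiliser in $\bar{B}$ is trivial, i.e.\ a regular orbit of $\bar B$ on $V$.

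The engine of the argument is Schur's lemma together with the double centraliser theorem. Since $V$ is an irreducible $\mathbb{F}_q\bar{B}$-module, $E:=\mathrm{End}_{\bar B}(V)$ is a finite division ring by Schur's lemma, hence a field by Wedderburn's little theorem. Because $\bar B$ is abelian, every element of $\bar B$ commutes with the whole action and therefore lies in $E$; thus the image of $\mathbb{F}_q\bar B$ in $\mathrm{End}_{\mathbb{F}_q}(V)$ is a commutative subring, and by density it equals $\mathrm{End}_E(V)$. Commutativity then forces $\dim_E V=1$, so that $V\cong E$ with $\bar B$ embedded in $E^{\times}$ and acting by multiplication.

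Once $V$ is presented as the field $E$ with $\bar B\le E^{\times}$ acting by multiplication, the conclusion is immediate: for any nonzero $v\in V=E$ and any $b\in\bar B$, the relation $bv=v$ gives $(b-1)v=0$ in $E$, whence $b=1$ since $v$ is invertible. Therefore every nonzero $v$ has trivial stabiliser in $\bar B$, that is $C_B(v)=C_B(V)$, and in particular such a $v$ exists. As $B$ and $V$ were arbitrary, this is precisely the assertion that $A$ acts with regular orbits on $G$.

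The step I expect to carry the real weight is the reduction to the one-dimensional field picture, since this is where both hypotheses are consumed: irreducibility is needed to force $\dim_E V=1$ (for a reducible $V$ the proper fixed subspaces of the nontrivial elements may cover $V$, so no regular orbit need exist), and commutativity of $\bar B$ is needed to place $\bar B$ inside the commutative endomorphism field rather than merely inside $\mathrm{End}_E(V)$. I would also remark that the hypothesis that $V$ be irreducible under $B$ can be weakened to $V$ being $A$-irreducible: the same reasoning applied to $A$ already exhibits $V$ as a field on which $A$, and hence the subgroup $B$, acts by scalar multiplication, so the conclusion persists; the restriction to proper subgroups $B$ is thus a feature of the definition of ``regular orbits on $G$'' and plays no essential role in the argument.
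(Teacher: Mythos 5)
Your argument is correct, but it takes a noticeably heavier route than the paper's. The paper's proof is a two-line group-theoretic observation: for any nontrivial $v\in V$, the subgroup $C_V(C_B(v))$ contains $v$ and is $B$-invariant because $C_B(v)$ is normal in the abelian group $B$; irreducibility then forces $C_V(C_B(v))=V$, i.e.\ $C_B(v)\le C_B(V)$, and the reverse inclusion is trivial. This needs no linear structure on $V$ at all --- in particular no appeal to solvability of $G$ to make $V$ an elementary abelian $q$-group --- whereas your reduction via Schur's lemma, Wedderburn's little theorem and the density theorem requires $V$ to be an $\mathbb{F}_q\bar{B}$-module, so you are proving a formally narrower statement (adequate for the applications in this paper, as you note, but a restriction the lemma itself does not impose). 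What your approach buys in exchange is a sharper structural picture: $B/C_B(V)$ embeds in the multiplicative group of a finite field, hence is cyclic, and $V$ is literally that field with $B$ acting by scalars. Both arguments in fact deliver the same strengthening of the stated conclusion, namely that \emph{every} nontrivial $v\in V$ satisfies $C_B(v)=C_B(V)$, and both correctly observe that the properness of $B$ in $A$ plays no role.
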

	
	\begin{proof}
		Let $0\ne v\in V$. Then $C_V(C_B(v))\ne 0$ is $B$-invariant, hence $C_B(V)=C_B(v)$ as required.
	\end{proof}
	
	Corollary 1.3 in \cite{shum} is valid when coprimeness condition is
	replaced by assumption that the action is good. Namely we have the following.
	
	\begin{lemman}\label{lem22} Let $G$ be a group on which an elementary abelian $p$-group $A$ with $|A|\geq
		p^{n+1}$ acts. Suppose that this action is good and that $G=\prod_{i=1}^hS_i$
		where $S_i$ are $A$-invariant $p_i$-subgroups such that $p_i\ne p_{i+1}$ and 
		$[S_i,S_{i+1}]=S_{i+1}$. If $C_G (a)$ is of Fitting height at most $n$ for every nontrivial element $a$ of $A$, then we have 
		\begin{equation*}
			S_q= \big \langle S_q\cap F(C_G(a)) : 1\ne a \in A \big\rangle
		\end{equation*}
		for any $q \geq n$.
	\end{lemman}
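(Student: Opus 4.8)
The plan is to follow the strategy behind Corollary 1.3 of \cite{shum}, splitting the proof into a generation step and a containment step. Fixing $q\ge n$, I would establish the two facts
\[
S_q=\big\langle\, C_{S_q}(a): 1\ne a\in A\,\big\rangle
\qquad\text{and}\qquad
C_{S_q}(a)\subseteq F(C_G(a))\ \text{ for every }1\ne a\in A .
\]
Granting these, the assertion is immediate: the second inclusion gives $C_{S_q}(a)\subseteq S_q\cap F(C_G(a))$ for each $a$, and substituting this into the first yields $S_q=\langle\, S_q\cap F(C_G(a)): 1\ne a\in A\,\rangle$, while the reverse inclusion is trivial.

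For the generation step I would pass to $\bar A=A/C_A(S_q)$, which acts faithfully on the $p_q$-group $S_q$ and whose induced action is again good (the good-action identity $H=[H,B]C_H(B)$ is inherited by $S_q$ and by the quotient $\bar A$, since $C_A(S_q)$ acts trivially). If $p_q=p$ then the action of $\bar A$ is trivial by Proposition 2.5 of \cite{egj}, so $\bar A=1$; if $p_q\ne p$ and $\bar A$ is noncyclic, then Lemma \ref{lem1} applies and gives $S_q=\langle C_{S_q}(a): a\in A\setminus C_A(S_q)\rangle$. The only remaining possibility is that $\bar A$ is cyclic, and here the hypothesis $|A|\ge p^{\,n+1}$ forces $|C_A(S_q)|\ge p^{\,n}\ne 1$, so that any $1\ne a\in C_A(S_q)$ satisfies $C_{S_q}(a)=S_q$ and the generation is trivial. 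In every case $S_q$ is generated by the subgroups $C_{S_q}(a)$ with $1\ne a\in A$.

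The containment step, $C_{S_q}(a)\subseteq F(C_G(a))$ for $q\ge n$, is the crux and is where I expect the genuine difficulty. It does \emph{not} follow from monotonicity of Fitting height alone, because $S_q$ is in general not normal in $G$ and $C_{S_q}(a)$ need not be normal in $C_G(a)$. Following \cite{shum}, I would argue by induction on $n$: passing to $\bar C=C_G(a)/F(C_G(a))$, which has Fitting height at most $n-1$, one tracks the images of the tower sections $C_{S_i}(a)$ and shows, using the good-action analogue of the coprime centralizer factorization $C_{S_i\cdots S_h}(a)=C_{S_i}(a)\cdots C_{S_h}(a)$ available from \cite{egj}, that the bottom sections of index $q\ge n$ are absorbed into $F(C_G(a))$ once the Fitting height of the centralizer is exhausted. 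Lemma \ref{lem21} supplies the regular orbits on irreducible sections that drive these reductions, and the defining identity $H=[H,B]C_H(B)$ provides the required factorizations through normal $A$-invariant sections.

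The main obstacle is precisely to carry out this descent without coprimeness: the coprime proof repeatedly invokes $C_{G/N}(a)=C_G(a)N/N$ and the splitting of fixed points through normal sections, and I would need the good-action versions of exactly these statements, drawn from \cite{egj}, to reproduce the inductive step of \cite{shum}. A further delicate sub-case is $a\in C_A(S_q)$, where $C_{S_q}(a)=S_q$ and the desired inclusion becomes $S_q\subseteq F(C_G(a))$; I would treat it by the same Fitting-series bookkeeping, the point being that $q\ge n$ forces the $A$-invariant tail $S_q\cdots S_h$ (and hence $S_q$) to descend into the Fitting subgroup of $C_G(a)$. Verifying that the good-action tools combine to yield this descent exactly as in the coprime case is the technical heart of the argument.
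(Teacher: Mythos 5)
Your proposal takes essentially the same route as the paper, whose entire proof of this lemma is the single sentence that it ``can be achieved by applying the same argument as in \cite{shum} by the use of Lemma \ref{lem1} and of Theorem 4.5 of \cite{egj}'' --- exactly the substitution scheme you describe, with Lemma \ref{lem1} driving the generation step and the good-action factorization results of \cite{egj} replacing the coprime reductions in the descent. Your decomposition into a generation step and the containment $C_{S_q}(a)\leq F(C_G(a))$, including the treatment of $C_A(S_q)$ and of the case $p_q=p$ via Proposition 2.5 of \cite{egj}, is consistent with and more detailed than what the paper itself records.
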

	
	\begin{proof}
		This can be achieved by applying the same argument as in \cite{shum} by the use of Lemma \ref{lem1} and of Theorem 4.5 of \cite{egj}.
	\end{proof}
	
	\begin{proof}[Proof of Theorem \ref{A}] 
	Let $h=h(G).$ Arguing as in \cite{egj} we can build an irreducible $A$-tower of height $h$ in $G$, that is a sequence of subgroups $S_i, \,i=1\ldots ,h,$ for which the following conditions are satisfied:
	\begin{itemize}
		\item[(1)] $S_i$ is a $p_i$-group, $p_i$ is a prime, for $i= 1,\ldots ,h$;
		
		\vspace{1mm}
		
		\item[(2)] $S_i$ normalizes $S_j$ for $i\leq j$;
		
		\vspace{1mm}
		
		\item[(3)] Set $P_h=S_h,\, P_i=S_i/T_i$ where $T_i=C_{S_i}(P_{i+1}), \, i= 1,\ldots ,h-1,$ and we assume that $P_i$ is not trivial for $i= 1,\ldots ,h$;
		
		\vspace{1mm}
		
		\item[(4)] $p_i\ne p_{i+1}, \, i= 1,\ldots ,h-1$.
		
		\vspace{1mm}
		
		\item[(5)] $\Phi(\Phi(P_i)) = 1,\, \Phi(P_i)\leq Z(P_i)$ and, if $p_i\ne 2$, then $P_i$ has exponent $p_i$ for $i= 1,\ldots ,h.$ Moreover $P_{i-1}$  centralizes $\Phi(P_i)$;
		
		\vspace{1mm}
		
		\item[(6)] $P_1$ is elementary abelian;
		
		\vspace{1mm}
		
		\item[(7)] There exists $H_i$ an elementary abelian $A$-invariant subgroup of $P_{i-1}$ such that $[H_i,P_i] = P_i$ for $i=2,\ldots, h$;
		
		\vspace{1mm}
		
		\item[(8)] $(\prod_{j=1}^{i-1}S_j)A$ acts irreducibly on $P_i/\Phi(P_i)$. 
	\end{itemize}
	
	Clearly we may assume that $h=n+2$ (resp. $h=n+1$) in case where $|A|\geq p^{n+1}$ (resp. $|A|\geq p^{n+2}$) and that $G=\prod_{i=1}^h S_i.$

	We begin with proving the first claim of the theorem. Set $P=S_{h-1}$, $X=\prod_{i=1}^{h-1} S_i$, and let $V$ denote the Frattini factor group of $P_{h}.$ By Fong-Swan theorem we may assume that $V$ is an irreducible complex $XA$-module. We shall proceed over the following steps:\newline
	
	$(1)$ $C_V(A)=0. $
	
	\begin{proof}
		We apply now Lemma \ref{lem22} to the pair $P,A$ and get 
		\begin{equation*}
			P= \big \langle P\cap F(C_G(a)) : 1\ne a \in A \big \rangle.
		\end{equation*}
		On the other hand 
		\begin{equation*}
			\big [C_{S_{h}}(A),P\cap F(C_G(a)) \big ]\leq \big [S_{h}\cap C_{G}(a), P\cap F(C_G(a)) \big ]=1
		\end{equation*}
		since $C_{S_{h}}(a)\leq O_{p_{h}}(C_G(a)) $ for each $a\in A$. Then $C_V(A)\leq C_V(P)=1$. 
	\end{proof}
	
	$(2)$ $C_A(P)=1$ and $(|P|,|A|)=1$.
	
	\begin{proof}
		
		We can observe that $A_1=C_A(P)$ centralizes all the subgroups $P, 
		S_{h-2},\ldots ,S_1$ due to good action: Firstly we have $ [P_{h-2}, A_1]=1$
		by the three subgroups lemma. Repeating the same argument we get $[P_{i}, A_1]=1$ for $i=1,\ldots ,h-2$. Since $C_{S_i}(A_1)T_i=S_{i}$ by
		Proposition 2.2 (3) in \cite{egj}, we may assume that $[S_i, A_1]=1$ for $i=1,\ldots ,h-1.$ It then follows that $h(C_G(a))\geq h-1$ for some $a\in A$%
		, which is impossible. Thus we have $C_A(P)=1.$ Notice that $[P,A_p]\leq [G,A_p]\cap P=1$ by  Proposition 2.5 in \cite{egj}. This shows that $(|P|,|A|)=1$ as claimed.
	\end{proof}
	
	\textit{(3) Theorem follows.}
	
	\begin{proof}
		Let now $M$ be an $X$ -homogeneous component of $V$ and let $ B=N_{A}(M).$
		Then $M$ is an irreducible $XB$-module such that $M|_X$ is homogeneous, and $C_{M}(B)=0$ as $C_{V}(A)=0.$
		
		We consider now the set of all pairs $(M_{\alpha}, C_{\alpha})$ such that $M_{\alpha}$ is an irreducible $XC_{\alpha}$-submodule  of $M_{_{XC_{\alpha}}}
		, \, M_{\alpha}|_{X}$ is homogeneous, and $C_{M_{\alpha}}(C_{\alpha})=0$. Choose $(M_1,C)  $ with $|C|$ minimum. Then $C_{M_1}(C_0)\ne 0$ for every $C_0<C$, $ (M_1)_{_{X}}$ is homogeneous and $Ker(X$ on $M_1)=Ker(X$ on $M).$
		
		Set now $\bar{X}=X/Ker(P$ on $M_1)$. We can observe that $[Z(\bar{P}),C]=1$.  Otherwise, it follows by Theorem 3.3 in \cite{egj} that for
		any $\bar{P}$-homogeneous component $W$ of $(M_1)_{_{\bar{P}}}$, the module $W$ is $C$-invariant and $ \bar{X}=N_{\bar{X}}(W)C_{\bar{X}}(C)$. Then $C_{%
			\bar{X}}(C)$ acts  transitively on the set of all $\bar{P}$-homogeneous components of $M_1$.  Clearly we have $[Z(\bar{P}),C]\leq Ker(\bar{P}$ on $W) $ and hence $[Z(\bar{P}),C]=1$, as claimed.
		
		Suppose now that $\bar{P}$ is abelian. Then $[\bar{P},C]=1$ by the above paragraph, which forces that $[\bar{X},C]=1$. Now, $h-1=h(\bar{X})\leq h(C_{\bar{X}}(C))\leq h-2$. This contradiction shows that $\bar{P}$ is nonabelian.
		
		Let now $U$ be a homogeneous component of $(M_1)_{_{\Phi(\bar{P})}}$. Notice that $\Phi(\bar{P})\leq Z(\bar{P})$ and so $[\Phi( \bar{P}),C]=1$. Then $U$
		is $C$-invariant. Set $\widehat{\bar{P}}=\bar{P} /Ker({\bar{P}}$ on $U)$.
		Now $\Phi(\widehat{\bar{P}})=\widehat{\Phi(\bar{P})}  $ is cyclic of prime
		order $p$. Since $[Z(\bar{P}),C]=1$ we get $[X,C]\leq  C_X(Z(\bar{P}))$ by the three subgroups lemma. Now clearly we have $ [X,C]\leq N_X(U).$ That is $X=N_X(U)C_X(C)$ as the action is good and so $ C_X(C)$ acts transitively on the set of all homogeneous components of $ (M_1)_{_{\Phi(\bar{P})}}$. Hence $M_1=\bigoplus_{t\in T}U^t$ where $T$ is a  transversal for $N_X(U)$ in $X$
		contained in $C_X(C).$ Notice that $N_{\bar{X}C}(U)=N_{\bar{X}}(U)C$. Set 
		$X_1=C_X(\Phi(\bar{P}))$. Now $C_{XC}(\Phi(\bar{P}))=X_1C\lhd XC$ and we have $[X,C]\leq X_1$ by the three subgroups lemma. Then $X=X_1C_X(C)$.
		Clearly we have $PS_{n}\leq X_1\leq N_X(U)$ and $ X_1C\lhd XC\lhd \lhd XA.$
		Recall that $P/\Phi(P)$ is an irreducible $XA$-module and hence $P/\Phi(P)$
		is completely reducible as an $X_1C$-module. Note that $\widehat{\bar{P}}
		/\Phi(\widehat{\bar{P}})\cong P/\Phi(P)C_P(U).$  As $P/\Phi(P)$ is
		completely reducible we see that so is $P/\Phi(P)C_P(U)$.  Hence $\widehat{
			\bar{P}}/\Phi(\widehat{\bar{P}})$ is also completely  reducible.
		
		Since $\widehat{\Phi(\bar{P})}\leq \widehat{Z(\bar{P})}$, there  is an $X_1C$-invariant subgroup $E$ containing $\widehat{\Phi(\bar{P})}$ so  that  
		\begin{equation*}
			\widehat{\bar{P}}/\widehat{\Phi(\bar{P})}=\widehat{Z(\bar{P})}/\widehat{%
				\Phi( \bar{P})}\oplus E/\widehat{\Phi(\bar{P})}.
		\end{equation*}
		Since the above sum is direct we have $\widehat{\Phi(\bar{P})}=\widehat{Z(\bar{P}%
			) }\cap E=Z(E).$ Thus we get $Z(E)=\widehat{
			\Phi(\bar{P})}=({\widehat{\bar{P}}})^{\prime}.$ As $E\unlhd \widehat{\bar{P}}$  we get $\Phi(E)\leq \widehat{\Phi(\bar{P})}=Z(E).$ It follows that $
		Z(E)=E^{\prime}=\Phi(E)=\widehat{\Phi(\bar{P})}$ is cyclic of prime order and hence $E$ is extraspecial. Now $[Z(\bar{P}),C]=1$ gives $[\widehat{Z(\bar{P})},C]=1$. Thus $[Z(E),C]=1.$
		
		Next we observe that $C_{C}(E)=1$: Otherwise there is a nonidentity  element 
		$a$ in $C$ such that $[\widehat{\bar{P}},a]=1$ and hence $ [\bar{P},a]\leq
		Ker(\bar{P}$ on $U)$. Since $X=X_1C_X(C)\leq N_X(U)C_X(C)$  we get $[\bar{P},a]\leq Ker(\bar{P}$ on $M)$, that is, $[\bar{P},a]=1$, which forces that $h-1=h(\bar{X})\leq h(C_G(a))\leq h-2$. This contradiction shows that $%
		C_{C}(E)=1$, as claimed.
		
		By $(2)$, $p$ is coprime to $|C|$. We apply now Lemma 2.1  in \cite{Esp} to
		the action of the semidirect product $EC$ on the  module $U$ and see that $C_U(C)\ne 0$. This final contradiction  completes the proof of the first claim of the theorem.

		Our proof of the second claim is essentially the same as in Theorem 3.3 in \cite{gross}: Assume that $|A|\geq p^{n+2}$. We may also assume that $h=n+1.$ Set $A_i=C_A(P_i)$ for $i=1,\ldots ,h$ and $A_0=A$. Clearly $A_i\leq A_{i-1}$ for $i=1, \ldots ,h.$ Notice that for each $a\in A$ we have either $C_{P_1}(a)=1$ or $[P_1, a]=1$ by the irreducibility of $P_1$ as an $A$-module. Also note that for $b\in A_{i-1}$ we have $[P_{i-1},b]=1$ whence $[P_j,b]=1$ for each $j<i$. Then we may assume that $[\,\,\prod_{j=1}^{i-1}S_j,b]=1$ and hence, by $(8)$, we have $C_{P_i}(b)=1$ for all  $b\in A_{i-1}\setminus A_i.$ On the other hand if $A_{i-1}/ A_i$ is noncylic, Lemma \ref{lem1} applied to the action of $A_{i-1}/ A_i$ yields that $P_i=\langle C_{P_i}(bA_i)\,\,:\,\,b\in A_{i-1}\setminus A_i\,\,\rangle.$ This contradiction shows that $|A_{i-1}/ A_i|\leq p$ for each $i=1,\ldots ,h=n+1$, that is, $|A|\leq p^{n+1}$ which is the final contradiction completing the proof of the second claim. 
	\end{proof}
	
	\section{Proof of Theorem B} 
	
	We shall need the following lemma which is also of independent interest too as an extension of Lemma 3.2 of \cite{gross} to good action case.
	
	\begin{lemman} \label{lem31} Suppose that $G$ is a finite solvable group acted on by an elementary abelian $p$-group $A$ with $|A|\geq
		p^{3}$. If this action is good and $C_G (a)$ is abelian for every  nonidentity $a\in A$ then $G$ is abelian.
		
	\end{lemman}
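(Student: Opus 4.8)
The plan is to peel off the hypotheses by reductions until $G$ is a $q$-group of nilpotency class at most $2$ acted on coprimely, and then to linearize the whole problem into a statement about an $A$-equivariant alternating form, where the rank hypothesis does all the work.

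First I would extract nilpotency from Theorem \ref{A}. Each $C_G(a)$ is abelian, hence of Fitting height at most $1$, and $|A|\geq p^{3}=p^{n+2}$ with $n=1$, so Theorem \ref{A} gives that $G$ is nilpotent. Write $G=\prod_q G_q$ as the direct product of its ($A$-invariant) Sylow subgroups. Each $G_q$ inherits the hypotheses: the action restricted to the $A$-invariant subgroup $G_q$ is again good (immediate from the defining identity $H=[H,B]C_H(B)$, since an $A$-invariant subgroup of $G_q$ is one of $G$), and $C_{G_q}(a)=G_q\cap C_G(a)$ is abelian. So by induction on $|G|$ each proper factor is abelian, and we may assume $G$ is a $q$-group. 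If $q=p$ then the good action of the $p$-group $A$ on the $p$-group $G$ is trivial (as in the proof of Lemma \ref{lem1}, via Proposition 2.5 of \cite{egj}), whence $G=C_G(a)$ is abelian for any $1\neq a\in A$. Thus I may assume $q\neq p$, so that $A$ acts coprimely. Next I would cut the class down to $2$: since $G$ is a nontrivial $q$-group, $Z:=Z(G)\neq 1$ is $A$-invariant, and by the coprime formula $C_{G/Z}(a)=C_G(a)Z/Z$ is a homomorphic image of the abelian group $C_G(a)$, hence abelian; so $G/Z$ satisfies the hypotheses and is abelian by induction. Therefore $G'\leq Z(G)$ and $G$ has class at most $2$.

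The crux is then a linear-algebra argument. With $G$ of class at most $2$ the commutator map induces an $A$-equivariant alternating bilinear map
\[
\beta\colon V\times V\longrightarrow W,\qquad \beta(\bar x,\bar y)=[x,y],
\]
where $V=G/Z(G)$ and $W=G'$ are abelian groups of order prime to $p$ on which $A$ acts coprimely. Because the action is coprime, after adjoining $p$-th roots of unity to the ground ring $A$ acts diagonalizably on both $V$ and $W$, giving weight decompositions $V=\bigoplus_{\lambda}V_\lambda$ and $W=\bigoplus_{\mu}W_\mu$ indexed by the characters $\lambda,\mu$ of $A$, with $\beta(V_\lambda,V_{\lambda'})\subseteq W_{\lambda\lambda'}$ by equivariance. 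For each $1\neq a\in A$ the group $C_G(a)$ is abelian, so $\beta$ vanishes on $C_V(a)\times C_V(a)$; since the fixed space of $a$ is $\bigoplus_{\lambda(a)=1}V_\lambda$, this says $\beta(V_\lambda,V_{\lambda'})=0$ whenever $\lambda(a)=\lambda'(a)=1$. Suppose $\beta(V_\lambda,V_{\lambda'})\neq 0$ for some occurring weights. Then no nontrivial $a$ satisfies $\lambda(a)=\lambda'(a)=1$, i.e. $\ker\lambda\cap\ker\lambda'=1$. But $A$ has exponent $p$, so $\lambda,\lambda'$ have image of order dividing $p$ and kernels of $\mathbb{F}_p$-codimension at most $1$; as $\dim_{\mathbb{F}_p}A\geq 3$, we get $\dim_{\mathbb{F}_p}(\ker\lambda\cap\ker\lambda')\geq 3-2=1$, a contradiction. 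Hence $\beta\equiv 0$, every commutator is trivial, and $G$ is abelian.

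The main obstacle is precisely this last step, and it is exactly where $|A|\geq p^{3}$ is consumed: the codimension count requires rank at least $3$, and for rank $2$ the conclusion genuinely fails. The earlier reductions are routine, the only delicate points being that good action restricts to $A$-invariant subgroups and descends to quotients, and the triviality of a good $p$-group action on a $p$-group. One could alternatively combine the generation $G=\langle C_G(a):1\neq a\in A\rangle$ of Lemma \ref{lem1} with the commutator estimates $[C_G(a),C_G(b)]\leq [G',a]\cap[G',b]$, but the weight-space formulation makes the role of the rank most transparent.
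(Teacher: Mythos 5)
Your argument is correct, but its decisive step is genuinely different from the paper's. Both proofs start identically: Theorem \ref{A} with $n=1$ yields that $G$ is nilpotent, and both then reduce to a situation where the commutators live in the centre. From there the paper stays entirely inside the ``good action'' formalism: taking a minimal counterexample it makes $G'$ a minimal $A$-invariant central subgroup, splits $A=B\times C$ with $C=C_A(G')$ of order at least $p^2$ (here Lemma \ref{lem1} forces $B$ cyclic), writes $G=\langle C_G(a):1\ne a\in C\rangle$, and proves $[C_G(x),C_G(y)]=1$ for $x,y\in C$ by the three subgroups lemma combined with the identity $C_G(y)=[C_G(y),\langle x\rangle]\,C_{C_G(y)}(x)$. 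You instead first dispose of the $p$-Sylow factor via the triviality of good $p$-on-$p$ actions, so that everything that remains is a genuinely coprime action on a class-$\le 2$ $q$-group, and then finish by a weight-space computation on the alternating map $G/Z(G)\times G/Z(G)\to G'$, where $|A|\ge p^3$ enters as the codimension count $\dim(\ker\lambda\cap\ker\lambda')\ge 3-2$. This buys transparency about exactly where the rank hypothesis is consumed, and in fact you never need the asserted equivariance $\beta(V_\lambda,V_{\lambda'})\subseteq W_{\lambda\lambda'}$ --- only that $V_\lambda,V_{\lambda'}\le C_V(a)$ for some $1\ne a$ kills $\beta(V_\lambda,V_{\lambda'})$. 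The one point you should spell out is the existence of the weight decomposition itself: $V=G/Z(G)$ and $W=G'$ are abelian $q$-groups of exponent $q^k$, not vector spaces, so ``diagonalizable after adjoining $p$-th roots of unity'' must be justified by lifting the primitive idempotents of the semisimple algebra $\mathbb{F}_q[A]$ to $(\mathbb{Z}/q^k\mathbb{Z})[A]$ (whereupon $a-1$ acts invertibly or trivially on each block); this is standard for coprime actions but is not literal diagonalization. The paper's route avoids this machinery and, notably, never needs to reduce to the coprime case at all.
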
	
	\begin{proof} We essentially follow the steps of the proof of Lemma 3.2 of \cite{gross}. Let $G$ be a minimal counterexample. Then $G'$ is a minimal $A$-invariant normal subgroup of $G$ . Note that the group $G$ is nilpotent by Theorem \ref{A}. It follows that $G' \cap Z(G)\ne 1$ and hence $G'\leq Z(G)$ by the minimality of $G'.$ Then we get $C_{G'}(a)$ is either trivial or equal to $G'$ for each nonidentity $a\in A$. Let now $C=C_A(G')$ and $B$ be a complement to $C$ in $A$. Notice that $\langle C_{G'}(b): 1\ne b \in B\rangle =1. $ This yields by Lemma \ref{lem1} that $B$ is cyclic and so $|C|\geq p^2.$ Applying Lemma\ref{lem1} we have $G=\langle C_{G}(a): 1\ne a \in C\rangle$.  Let $x$ and $y$ be two nonidentity elements of $C$. Observe that $[C_G(x),C_G(y),\langle x\rangle]=1=[\langle x\rangle, C_G(x), C_G(y)]$. It follows by the three subgroups lemma that $[ C_G(y), \langle x\rangle, C_G(x)]=1.$ Due to good action we have $C_G(y)=[C_G(y), \langle x\rangle]C_{C_G(y)}(x)$. Then $[C_G(y),C_G(x)]=[C_G(y), \langle x\rangle, C_G(x)]=1$. As a result, $G$ is abelian.
	\end{proof}
	
		\begin{proof}[Proof of Theorem \ref{B}]  We shall follow the steps of the proof of Theorem 3.3 in \cite{gross}. Let $G$ be a minimal counterexample to Theorem \ref{B}. We can observe that $F(G)$ is the unique minimal $A$-invariant normal subgroup of $G$ and is an elementary abelian $q$-group for some prime $q.$ By Theorem \ref{A} it follows that $G/F(G)$ is a nilpotent $q'$-group. Since $G$ is not supersolvable, the minimality of $G$ implies that $G=F(G)R$ where $R$ is an $A$-invariant $r$-subgroup of $G$, and either $R$ is nonabelian or the exponent of $R$ does not divide $q-1.$ Let $C=C_A(R)$ and $B$ be a complement to $C$ in $A$. Suppose first that $|C|$ is not cyclic. We see by Lemma \ref{lem1} that $C_{F(G)}(a)\ne 1$ for some nonidentity $a\in C$. Notice that $C_{F(G)}(a)$ is $RA$-invariant and hence is equal to $F(G)$ by the uniqueness of $F(G).$ This implies that $G=C_G(a)$ is supersolvable. Therefore $C$ is cyclic and so $|B|\geq p^3.$
	
	Let $1\ne b\in B$. Then $C_R(b)\ne R$ and hence $F(G)C_R(b)$ is a proper $A$-invariant subgroup of $G$. It follows that $F(G)C_R(b)$ is supersolvable which yields that $C_R(b)$ is abelian of exponent
	dividing $q-1$. Now $R$ is abelian by Lemma \ref{lem31}. As $R = \langle C_R(a) : 1\ne a\in B\rangle$ we see that the exponent of $R$ must divide $q-1$, establishing the claim. 
	\end{proof}
	
	\section{Proof of Theorem C} 
	We first prove some lemmas which will be used in the proof of Theorem \ref{C}.

	\begin{lemman} \label{lem41}Let $A$ be a noncylic abelian $r$-group acting on the group $H$ such that the action is good. Assume that $H=VG$ where $V$ and $G$ are both $A$-invariant, $V$ is a normal $p$-subgroup of $H$ with $C_G(V)=1$ and that $C_V(a)\leq C_V(g)$ for each nonidentity $a\in A$ and each $p'$-element $g\in C_G(a)$. Then either $p=r$ or $G$ is an $r'$-group. 
	\end{lemman}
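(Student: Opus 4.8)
The plan is to assume for contradiction that $p \neq r$ and that $G$ is \emph{not} an $r'$-group, and to extract a contradiction from the hypothesis that $C_V(a) \leq C_V(g)$ for every nonidentity $a \in A$ and every $p'$-element $g \in C_G(a)$. Since $G$ is not an $r'$-group and $r \neq p$, the group $G$ contains a nontrivial $A$-invariant $r$-subgroup; more to the point, some $r$-element of $G$ acts nontrivially on $V$ (this uses $C_G(V)=1$, which guarantees that $G$ embeds in its action on $V$). The first step is therefore to locate, via the good action, a suitable element or subgroup of $G$ that both centralizes $V$ on the fixed-point part seen by $A$ and yet acts nontrivially, thereby violating $C_G(V)=1$.

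Concretely, I would first invoke Lemma~\ref{lem1}: because $A$ is noncyclic abelian and (after passing to the faithful action on the relevant section) one may arrange an $r$-group on which $A$ acts, one obtains $V = \langle C_V(a) : 1 \neq a \in A\rangle$, or the analogous decomposition of $G$ into its pieces centralized by the various $1 \neq a \in A$. The key structural point is that an $r$-element $g \in G$, being a $p'$-element (as $p \neq r$), will lie in $C_G(a)$ for a suitable nontrivial $a$ — this is where the good action is used to distribute fixed points across the subgroups of $A$. The hypothesis $C_V(a) \leq C_V(g)$ then forces $g$ to centralize $C_V(a)$ for each such $a$. Running this over all $1 \neq a \in A$ and using the Lemma~\ref{lem1} generation $V = \langle C_V(a) : 1 \neq a \in A\rangle$ yields that $g$ centralizes all of $V$, i.e.\ $g \in C_G(V) = 1$, so $g = 1$. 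Since $g$ was an arbitrary nontrivial $r$-element, this shows $G$ has no nontrivial $r$-elements, i.e.\ $G$ is an $r'$-group, contradicting our assumption.

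The step I expect to be the main obstacle is the second one: arranging that a given $r$-element $g$ of $G$ actually lies in $C_G(a)$ for \emph{enough} nontrivial $a \in A$ to run the generation argument, and verifying that $g$ is genuinely a $p'$-element in the sense the hypothesis requires. This is delicate because $A$ acts on $G$, and the fixed-point subgroups $C_G(a)$ need not contain a prescribed $r$-element unless one first replaces $g$ by an $A$-conjugate or works inside an $A$-invariant $r$-subgroup $R \leq G$ on which one applies the good-action decomposition $R = \langle C_R(a) : 1 \neq a \in A\rangle$. I would handle this by choosing $R$ to be a nontrivial $A$-invariant Sylow $r$-subgroup of $G$ (which exists by a standard coprime-type or good-action Sylow argument), noting that $C_G(V)=1$ forces $R$ to act faithfully on $V$, and then applying Lemma~\ref{lem1} to $R$ to write $R = \langle C_R(a) : 1 \neq a \in A\rangle$. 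Each generator $C_R(a)$ consists of $r$-elements, hence $p'$-elements, lying in $C_G(a)$, so the hypothesis gives $C_V(a) \leq C_V(x)$ for every $x \in C_R(a)$; combined with $V = \langle C_V(a) : 1 \neq a \in A\rangle$ from Lemma~\ref{lem1} this makes every generator of $R$ centralize $V$, whence $R \leq C_G(V) = 1$, the desired contradiction.
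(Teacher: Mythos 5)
Your overall strategy --- generate $V$ as $\langle C_V(a) : 1\ne a\in A\rangle$ via Lemma~\ref{lem1}, use the hypothesis to force $r$-elements to centralize $V$, and conclude from $C_G(V)=1$ --- is the same as the paper's, but the step you yourself flagged as the main obstacle is not actually closed by your fix. If $x\in C_R(a)$ for a \emph{single} nonidentity $a$, the hypothesis only gives $C_V(a)\leq C_V(x)$, i.e.\ $x$ centralizes the one generator $C_V(a)$; it gives no control over the other generators $C_V(a')$ for $a'\ne a$, so you cannot conclude $x\in C_G(V)$. The generation $R=\langle C_R(a):1\ne a\in A\rangle$ does not repair this. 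Indeed, note that your argument nowhere uses that $R$ and $A$ involve the \emph{same} prime $r$: it would run verbatim for an $A$-invariant Sylow $s$-subgroup for any prime $s\ne p$, whereas the dichotomy being proved is specifically about the prime of $A$. What you actually need is that the relevant $r$-elements lie in $C_G(a')$ for \emph{every} $1\ne a'\in A$, i.e.\ that they lie in $C_G(A)$.

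This is precisely how the paper proceeds, and the missing ingredient is the equal-prime case of good action: since $A$ and $R$ are both $r$-groups, Proposition 2.5 of \cite{egj} (the case $r=p$ in the proof of Lemma~\ref{lem1}) forces $[R,A]=1$, so $R\leq C_G(A)$; equivalently, $C_R(A)\ne 1$ whenever $R\ne 1$, because $R$ is a nontrivial normal subgroup of the $r$-group $RA$. Concretely, the paper first shows that $C_G(A)$ is a $p$-group: a $p'$-element $g$ of $C_G(A)$ lies in $C_G(a)$ for \emph{all} nonidentity $a$, hence centralizes every $C_V(a)$ and therefore all of $V$, so $g\in C_G(V)=1$. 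Then $1\ne C_R(A)\leq C_G(A)$ exhibits a nontrivial $r$-element in a $p$-group, forcing $p=r$. Your proof becomes correct once you insert the observation that $R\leq C_G(A)$ (or, equivalently, restrict from the outset to $p'$-elements of $C_G(A)$ rather than of the various $C_G(a)$).
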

	
	\begin{proof} Notice that we have $V=\langle \;C_V(a)\,\,:\,\,1\ne a\in A\,\,\rangle$ by Lemma \ref{lem1}. On the other hand, by hypothesis, the group $C_V(a)$ is centralized by each  $p'$-element of  $C_G(A)$ for each nonidentity $a\in A$. Due to faithful action of $G$ on $V$ we observe that $C_G(A)$ is a $p$-group. Let now $R$ be an $A$-invariant Sylow $r$-subgroup of $G$. Notice that if $R\ne 1$, then  $C_R(A)\ne 1$ whence $p=r$, as desired.
	\end{proof}
	
	\begin{lemman}\label{lem42} Let an elementary abelian $r$-group $A$ of order $r^2$ act on a $p$-solvable group $G$ and let $V$ be a faithful $GA$-module over a field $F$ of characteristic $p$, where $p\ne r$, and $O_p(G)=1$. Suppose that the action of $A$ on $VG$ is good and that\par
		
		$(i)$ $C_G(a)$ is $p$-nilpotent for each nonidentity $a\in A$;\par
		$(ii)$ $C_V(a)\leq C_V(g)$ for each nonidentity $a\in A$ and each $p'$-element $g\in C_G(a)$.
		
		\noindent Then $G$ is $p$-nilpotent.
	\end{lemman}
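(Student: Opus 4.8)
The plan is to use Lemma \ref{lem41} to collapse the hypotheses to a genuinely coprime action, and then to run the argument of \cite{guo}. First I would record that $C_G(V)=1$ because $V$ is a faithful $GA$-module, and form the semidirect product $H=VG$ in which $V$ is a normal elementary abelian $p$-subgroup and $G$ is an $A$-invariant complement. The action of $A$ on $H$ is good by hypothesis, $A$ is a noncyclic abelian $r$-group, and condition $(ii)$ is precisely the centralizer hypothesis of Lemma \ref{lem41}. Since $p\neq r$, that lemma forces $G$ to be an $r'$-group; as $A$ is an $r$-group, the action of $A$ on $G$ (and on $V$) is therefore coprime. This is the only place where the good-action hypothesis, rather than mere coprimeness, is used: it recovers for free the fact that in \cite{guo} was automatic from coprimeness.

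With the action now coprime I would set $K=O_{p'}(G)$. Since $G$ is $p$-solvable with $O_p(G)=1$ we have $K=F^{*}(G)$ and $C_G(K)\leq K$; moreover $K\neq 1$ whenever $G\neq 1$, for otherwise $F^{*}(G)=O_p(G)=1$. Because $O_{p'}(G/K)=1$ and $C_{G/K}(O_p(G/K))\leq O_p(G/K)$, proving $p$-nilpotency is equivalent to proving that $G/K$ is a $p$-group, i.e. that an $A$-invariant Hall $p'$-subgroup $Q\geq K$ satisfies $Q=K$. The usable local data come from coprime generation: the $r+1$ subgroups $A_1,\dots,A_{r+1}$ of order $r$ give $G=\langle C_G(A_i):i\rangle$ and, by Lemma \ref{lem1} applied to the $p$-group $V$, also $V=\langle C_V(A_i):i\rangle$; each $C_G(A_i)=C_G(a)$ (for $1\neq a\in A_i$) is $p$-nilpotent by $(i)$, and by $(ii)$ its normal $p$-complement $O_{p'}(C_G(A_i))$ centralizes $C_V(A_i)=C_V(a)$.

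To finish I would take a minimal counterexample, reduce to the case where $V$ is irreducible, and analyse $V$ by Clifford theory with respect to the normal $p'$-subgroup $K$: either $V|_K$ is homogeneous, or $G$ transitively permutes its homogeneous components and one passes to the inertia subgroup and invokes minimality. In each case the goal is to show that a $p'$-element lying outside $K$ would centralize too large a part of $V$, contradicting $C_G(K)\leq K$; for this I would use a fixed-point lemma in the spirit of Lemma 2.1 of \cite{Esp} (as in the proof of Theorem \ref{A}), combined with the facts that $O_{p'}(C_G(A_i))$ fixes $C_V(A_i)$ and that the $C_V(A_i)$ generate $V$. The main obstacle is exactly this representation-theoretic core, namely transporting the \emph{local} information ``the $p'$-part of each $C_G(A_i)$ fixes $C_V(A_i)$'' to the \emph{global} conclusion $Q=K$, together with the bookkeeping needed to keep $V$ faithful and $O_p(G)=1$ through the Clifford reduction. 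This part is carried out as in \cite{guo}, the coprimeness required there being now supplied by Lemma \ref{lem41}.
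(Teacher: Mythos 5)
Your proposal matches the paper's proof: the paper likewise uses Lemma \ref{lem41} (with $C_G(V)=1$ from faithfulness and $p\ne r$) to conclude that $A$ acts coprimely on $G$, and then simply invokes Theorem 12 of \cite{guo}. Your additional sketch of the Clifford-theoretic core of \cite{guo} is not needed, since that result is cited as a black box, but it does not change the argument.
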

	
	\begin{proof} It can be easily seen that by Lemma \ref{lem41} we may assume that $A$ acts coprimely on $G$. Appealing to Theorem 12 of \cite{guo} we have the result. \end{proof}
	
		\begin{proof}[Proof of Theorem \ref{C}]  This can be achieved by repeating the proof of Theorem A in \cite{guo} word by word by replacing Theorem 12 of \cite{guo} with Lemma \ref{lem42} above.
			\end{proof}

	\section{Proof of Theorem D}

		We say that the pair $(A,G)$ satisfies \textbf{Hypothesis$(p)$} for a prime $p$ if $A$ is a
		group acting on the group $G$, the action of $A$ on $G$ is good, and every $p$-element of $C_{G}(A)$ centralizes every
		$p^{\prime}$-element of $C_{G}(a)$ for any $1\neq a\in A.$

	\begin{lemman} \label{lem52} If the pair $(A,G)$ satisfies Hypothesis$(p)$ then $C_{G}(A)=P\times O_{p^{\prime}}(C_{G}(A))$ where $P\in Syl_{p}(C_{G}(A)).$
\end{lemman}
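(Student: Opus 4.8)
The goal is to show that a Sylow p-subgroup P of C_G(A) is normal in C_G(A) and has a normal p-complement equal to O_{p'}(C_G(A)), so that C_G(A) splits as the direct product. Let me sketch the argument.

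Let me write C = C_G(A) for brevity. Let me think about what Hypothesis(p) gives us directly. For any nontrivial a ∈ A, every p-element of C centralizes every p'-element of C_G(a). In particular, since C = C_G(A) ⊆ C_G(a) for every a ∈ A, and C itself is a subgroup, every p-element of C centralizes every p'-element of C_G(a) for each a. So the plan is to derive from this that C is p-nilpotent with central Sylow p-subgroup, i.e. the p-part and p'-part commute elementwise.

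The plan is to reduce Hypothesis$(p)$ to a purely internal statement about $C:=C_G(A)$ and then invoke a standard fact about $p$-decomposable groups. The key observation is that $C=C_G(A)\leq C_G(a)$ for every $1\neq a\in A$, so every $p'$-element of $C$ is in particular a $p'$-element of $C_G(a)$. Fixing any nontrivial $a\in A$, Hypothesis$(p)$ then says that every $p$-element of $C$ centralizes every $p'$-element of $C$; this internal commuting condition is all I will use. I would record it at the outset as the single substantive step, after which the conclusion is essentially a textbook fact.

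First I would show that a Sylow $p$-subgroup $P$ of $C$ is normal. Set $L=\langle\,x\in C: x \text{ a } p'\text{-element}\,\rangle$; since the set of $p'$-elements is closed under conjugation, $L\trianglelefteq C$. Every $p'$-element centralizes each $p$-element, and $P$ consists of $p$-elements, so $L\leq C_C(P)$. The one computation to get right is the order count showing $C=PL$: because $L$ contains a full Sylow $q$-subgroup of $C$ for every prime $q\neq p$, the $p'$-part of $|L|$ equals that of $|C|$, and since $P\cap L\in\mathrm{Syl}_p(L)$ one gets $|PL|=|P|\,|L|/|P\cap L|=|P|\,|C|_{p'}=|C|$. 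Hence $N_C(P)\supseteq\langle P,L\rangle=PL=C$, so $P\trianglelefteq C$ and $P=O_p(C)$.

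Finally, $C/P$ is a $p'$-group, so by Schur--Zassenhaus there is a complement $N$ with $C=PN$, $P\cap N=1$, and $N$ a $p'$-group. Every element of $N$ is then a $p'$-element, hence centralizes the $p$-group $P$, giving $[P,N]=1$ and therefore $C=P\times N$. A quick Dedekind-law argument identifies $N$ with $O_{p'}(C)$: $N$ is a normal $p'$-subgroup so $N\leq O_{p'}(C)$, while $O_{p'}(C)=O_{p'}(C)\cap PN=N\,(O_{p'}(C)\cap P)=N$. This yields $C_G(A)=P\times O_{p'}(C_G(A))$ with $P\in\mathrm{Syl}_p(C_G(A))$, as required.

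I do not expect any serious obstacle here: the only genuine idea is the reduction $C_G(A)\leq C_G(a)$ that converts Hypothesis$(p)$ into the internal statement ``$p$-elements centralize $p'$-elements in $C_G(A)$'', and everything after that is the standard characterization of $p$-decomposable groups. The lone point requiring care is the order count establishing $C=PL$ (equivalently, that $L$ contains the full $p'$-part of $C$), and the mild assumption that $A\neq 1$ so that a nontrivial $a$ exists; both are immediate in the setting of the paper.
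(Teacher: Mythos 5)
Your proposal is correct and follows essentially the same route as the paper: both reduce Hypothesis$(p)$ via $C_G(A)\leq C_G(a)$ to the internal statement that $p$-elements of $C_G(A)$ centralize its $p'$-elements, deduce that $P$ is normal (the paper by noting $[P,S]=1$ for every Sylow $s$-subgroup $S$ with $s\neq p$, you by the equivalent subgroup $L$ generated by all $p'$-elements), and then apply Schur--Zassenhaus and centrality of the complement to get the direct product. Your extra details (the order count for $C=PL$ and the identification of the complement with $O_{p'}(C_G(A))$) are correct but not a different method.
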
	
		\begin{proof}
			Let $P\in Syl_{p}(C_{G}(A)).$ By Hypothesis$(p)$ it holds that $[P,S]=1$ for any $S\in Syl_{s}(C_{G}(A))$ where $s$ is a prime different from $p$. This implies that $P$ is normal in
			$C_{G}(A)$. By the Schur-Zassenhaus theorem  there exists a subgroup
			$H$ of $C_{G}(A)$ such that $C_{G}(A)=PH$ and $P\cap H=1.$ Clearly then we have  $[P,H]=1$ which completes the proof$.$
		\end{proof}

	\begin{theoremn} \label{the53}
		Suppose that $A$ is a $p$-group and that the pair $(A,G)$ satisfies
		Hypothesis$(p)$. Then $G$ admits a normal $p$-complement.
	\end{theoremn}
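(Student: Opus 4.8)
The plan is to argue by induction on $|G|$ through a minimal counterexample, the decisive preliminary being that the hypotheses are inherited downward. Good action restricts to every $A$-invariant subgroup and passes to every quotient by an $A$-invariant normal subgroup; moreover the action of each $\langle a\rangle$ is itself good, so Proposition~2.2(3) of \cite{egj} gives $C_{G/N}(a)=C_G(a)N/N$ and $C_{G/N}(A)=C_G(A)N/N$ for every $A$-invariant $N\trianglelefteq G$ \emph{without any coprimeness}. Hence a $p'$-element (resp. $p$-element) of a centralizer in $G/N$ lifts to one of the same type in $G$, so \textbf{Hypothesis$(p)$ holds for every quotient $G/N$ and every $A$-invariant subgroup}. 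I regard this descent of Hypothesis$(p)$ as the engine of the induction. Accordingly a minimal counterexample $G$ has all proper $A$-invariant subgroups and all proper quotients $p$-nilpotent; replacing $A$ by $A/C_A(G)$ I may assume the action faithful, and factoring out the $A$-invariant subgroup $O_{p'}(G)$ (whose preimage of a normal $p$-complement would be one for $G$) I may assume $O_{p'}(G)=1$, while $G$ is not a $p$-group.

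Next I would fix the Sylow structure. Forming $\Gamma=GA$ and taking a Sylow $p$-subgroup $S$ of $\Gamma$ with $A\le S$, the subgroup $P=S\cap G$ is an $A$-invariant Sylow $p$-subgroup of $G$. By Proposition~2.5 of \cite{egj} the good action of the $p$-group $A$ on the $p$-group $P$ is trivial, so $[P,A]=1$ and $P\le C_G(A)$; in fact every $A$-invariant $p$-subgroup of $G$ is centralized by $A$. Combining this with Lemma~\ref{lem52}, $P$ is precisely the Sylow $p$-subgroup of $C_G(A)$ and $C_G(A)=P\times O_{p'}(C_G(A))$.

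The core of the argument is to reduce to the case $O_p(G)\ne 1$ by a local normal $p$-complement criterion whose test subgroup is \emph{characteristic} in $P$, hence automatically $A$-invariant. Since $G$ is not $p$-nilpotent, I would take such a subgroup $W$ (for odd $p$, $W=Z(J(P))$ via Glauberman--Thompson); then $N_G(W)$ is $A$-invariant, so if $N_G(W)<G$ induction makes it $p$-nilpotent and therefore $G$ is $p$-nilpotent, a contradiction. Thus $W\trianglelefteq G$ and $O_p(G)\ne1$. For $A$ abelian one can instead pass to $\Gamma$, where $A\le Z(S)$ forces $N_G(Q)$ to be $A$-invariant for every $Q\le S$, so that Frobenius's criterion yields the same conclusion. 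I expect \textbf{this step to be the main obstacle}: one must produce an $A$-invariant local subgroup witnessing non-$p$-nilpotency uniformly in $p$, the prime $2$ being the delicate case.

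Finally I would dispose of the case $V:=O_p(G)\ne1$. Consider the $A$-invariant normal subgroup $C_G(V)$. If $C_G(V)<G$, induction gives it a normal $p$-complement $O_{p'}(C_G(V))$, which is $A$-invariant and normal in $G$, hence contained in $O_{p'}(G)=1$; so $C_G(V)$ is a normal $p$-subgroup and $C_G(V)=V$. Since $[V,A]=1$ and $V\trianglelefteq G$, the three subgroups lemma yields $[G,A]\le C_G(V)=V\le C_G(A)$, and good action then gives $G=[G,A]C_G(A)=C_G(A)$; by Lemma~\ref{lem52} and $O_{p'}(G)=1$ this forces $G=P$, a $p$-group, contrary to assumption. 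If instead $C_G(V)=G$, then $V\le Z(G)$, and applying induction to $G/V$ and splitting off the characteristic (hence $A$-invariant and therefore trivial) Hall $p'$-part by Schur--Zassenhaus shows $G/V$, and so $G$, is a $p$-group, again a contradiction. This rules out the minimal counterexample and completes the proof.
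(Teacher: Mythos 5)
Your proposal is correct, and at its core it is the paper's argument: minimal counterexample, an $A$-invariant Sylow $p$-subgroup $P$ of $G$ lying inside $C_G(A)$ because a good action of a $p$-group on a $p$-group is trivial, Lemma~\ref{lem52}, and induction through $A$-invariant local subgroups and quotients. The one step you flag as ``the main obstacle'' is not an obstacle, and the paper's treatment shows why: since $P\le C_G(A)$, \emph{every} nontrivial subgroup $K\le P$ is centralized by $A$, hence $A$-invariant, so $N_G(K)$ is $A$-invariant for every such $K$, and the Frobenius normal $p$-complement theorem in normalizer form (\cite[Theorem 5.26]{Misaacs}) applies uniformly in $p$ with no hypothesis on $A$: if $G$ has no normal $p$-complement, some $N_G(K)$ has none, and minimality forces $N_G(K)=G$. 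Your Glauberman--Thompson detour for odd $p$ and the restriction ``for $A$ abelian'' in the Frobenius fallback are therefore both unnecessary --- the fact you had already established ($P\le C_G(A)$) is exactly what makes the uniform argument work. The endgames differ: the paper first shows $[G,A]=G$ (otherwise $G=[G,A]C_G(A)=O_{p'}(G)P$ is $p$-nilpotent), then pushes the normal subgroup $K$ up to $K=P$ via the quotient $G/K$ and its $p$-complement, and finishes with $P\le Z(G)$ by the three subgroups lemma and Schur--Zassenhaus; your dichotomy on $C_G(O_p(G))$ combined with the reduction $O_{p'}(G)=1$ reaches the same contradiction and is equally valid (only note that when $V=O_p(G)$ is nonabelian the case $C_G(V)<G$ yields $C_G(V)\le V$ rather than equality, which is all your argument actually uses). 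Your explicit verification that Hypothesis$(p)$ descends to quotients via $C_{G/N}(a)=C_G(a)N/N$ is a point the paper passes over silently when it asserts that $G/K$ satisfies the hypothesis, and is worth recording.
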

	
	\begin{proof}
		Let $G$ be a minimal counterexample to the theorem and let $T$ be a Sylow
		$p$-subgroup of the semidirect product $GA$ containing $A.$ The subgroup
		$P=T\cap G$ is then an $A$-invariant Sylow $p$-subgroup of $G,$ and is
		contained in $C_{G}(A)$ since the action of $A$ on $G$ is good. By Lemma \ref{lem52}, $C_{G}(A)=P\times D$ where $D$ is a Hall $p^{\prime}$-subgroup of
		$C_{G}(A).$
		
		If $[G,A]<G$ then $[G,A]=O_{p^{\prime}}([G,A])(P\cap [G,A])$ by the minimality of $G$ 
		and hence $$G=[G,A]C_{G}(A)=O_{p^{\prime}}([G,A])(P\times D)=O_{p^{\prime}%
		}(G)P$$ which is not possible. So $[G,A]=G.$
		
		As $G$ has no normal $p$-complement, \cite[Theorem 5.26]{Misaacs} implies the existence of a nontrivial subgroup
		$K$ of $P$ such that $N_{G}(K)$ does not have a normal $p$-complement. Since $K$ is $A$-invariant we see that  $N_{G}(K)=G.$ As $G/K$ satisfies the hypothesis of
		the theorem it follows by the minimality of $G$ that $G/K$ has a normal $p$-complement, say $N/K$. If $N<G$
		then $N$ has a normal $p$-complement $M$ by the minimality of $G$. Clearly $M$ is the normal
		$p$-complement of $G.$ This forces that $G=N$ whence $K=P$ and $P$ is normal in
		$G=[G,A]$. By the three subgroups lemma we get that $P\leq Z(G)$. This leads by the
		Schur-Zassenhaus theorem to the the final contradiction that $G$ has a normal
		$p$-complement. 
	\end{proof}
	
	\bigskip

	\begin{proof}[Proof of Theorem \ref{D}] 	Let $G$ be a minimal counterexample to the theorem. By Theorem \ref{the53} we
			can assume that $G$ has a normal $p$-complement $N$. Then there exists a Sylow
			$p$-subgroup $P$ of $G$ which is centralized by $A$ and $G=NP.$ In particular
			$[G,A]\leq N$ and hence is a $p^{\prime}$-group. Furthermore as $A$ acts
			coprimely on $N$ we see that for any prime $q\neq p$ there exists an
			$A$-invariant Sylow $q$-subgroup $Q$ of $N.$ As $A$ is noncyclic, Lemma 1.1 yields 
			$Q=\left\langle C_{Q}(a):1\neq a\in A\right\rangle $ which implies that
			$[Q,R]=1$ for any Sylow $r$-subgroup $R$ of $C_{G}(A)$ with $r\neq q$. In
			particular we get $[N,P]=1$ which gives that $G=N\times P$. Clearly $G$ is solvable if $N$ is solvable
			which is the case if $N$ is proper in $G.$ Therefore we may	assume that $G$ is a $p'$-group which does not
			have any proper, nontrivial $A$-invariant normal subgroup. In particular $G=G_{1}\times G_{2}\times\cdots\times G_{n}$
			where $G_{1}$ is a nonabelian simple group and $G_{i}\cong G_{1},i=1,2,\ldots ,n$ and
			$A$ acts transitively on $\{G_{1},G_{2},\ldots ,G_{n}\}.$
			
			By \cite{Be} we may assume that $C_G(A)\ne 1$. Let $r$ be a prime dividing $\left\vert C_{G}(A)\right\vert $ and $q$ a prime different from $r.$ As $A$ acts coprimely on
			$G$ there exists an $A$-invariant Sylow q-subgroup $Q$ of $G.$ Let $R\in
			Syl_{r}C_{G}(A)$. By the argument
			in the above paragraph we see that $[R,Q]=1.$ If 1$\neq x\in R$ where
			$x=x_{1}x_{2}\cdots x_{n}$ with $x_i\in G_i,\, i=1,2,\ldots ,n,$ then $x_{1}\neq1$  as $A$ acts transitively on the set of
			components of $x$ and centralizes $x$. Notice that $Q\cap G_{1}\in
			Syl_{q}G_{1}$ and centralizes $x_{1}$ since $x\in C_G(Q).$ It follows that $[G_{1}:C_{G_{1}}(x_{1})]$
			is a power of $r$ which is not possible by \cite[Theorem 3.9]{isaacs} as $G_{1}$ is
			nonabelian simple.
			
			If furthermore $A$ is elementary abelian of order $p^{n}$ then by \cite{wa} we get
			that $h(G)\leq n.$

	\end{proof}
	
	\section{Proof of Theorem E}
	Let $A$ be a noncyclic abelian $p$-group of order $p^n$ acting on the solvable group $G$ by automorphisms so that the action is good. Suppose that there exists a natural number $m$ such that $[C_G(a),\, C_G(A)]_{m}=1$ for all $a\in A$. Then clearly $C_G(A)$ is nilpotent. Let $h=h(G)$. Due to good
	action there exists an irreducible $A$-tower $S_i,\, i=1,\ldots ,h$, that is, a
	sequence defined as in the proof of Theorem \ref{A}, of height $h$ in $G$. We may assume that $G=\prod_{i=1}^{h}S_i$.
	
	By the irreducibility of $P_1$ as an $A$-module, we have either $[P_1,A]=1$ or $[P_1,A]=P$. Suppose that $[P_1,A]=1$. Since  $P_2=\langle C_{P_2}(a)\,\,:\,\,1\ne a\in A\,\,\rangle$ by Lemma \ref{lem1}, we get $[P_2,S_1]=1$, which is impossible. Thus we may assume that there exists $1\ne a\in A$ such that $[P_1,a]\ne 1$. If $G$ is a $p'$-group, by Theorem 3.1 in \cite{tur} we see that $C_{P_h}(a),\ldots ,C_{P_2}(a)$ forms an $A$-tower. Set $A_1=\langle a\rangle$. By induction applied to the action of $A/A_1$ on the group $\prod_{i=2}^{h}C_{P_i}(a)$ we get $h-1\leq n-1$ and hence the theorem follows. On the other hand $p_i\ne p$ for each $i>1$ because otherwise we get $[P_2P_1,A]=1
	$ and so $[P_2,P_1]=1$. This forces that $p_1=p$ and so $[P_1,A]=1$, which is not possible. This completes the proof of Theorem \ref{E}.
\end{proof}

	Finally we state two more results that can easily be obtained by applying the same argument
	as in \cite{ja} by the use of Lemma \ref{lem1} and Theorem 4.5 in \cite{egj}.
	
	\begin{theoremn}
		Let $A$ be a noncyclic group of square free exponent $n$ acting on the group 
		$G$. Suppose that this action is good and that one of the following holds.
		
		$(1)$ There exists a natural number $m$ such that $[C_G(a), C_G(b)]_{m}=1$
		for all nonidentity elements $a, b\in A$.
		
		$(2)$ $Z(A) =1$ and $A$ has exponent $n$.
		
		\noindent Then $G$ is nilpotent of class bounded by a  function depending only on $m$ and $n$.
	\end{theoremn}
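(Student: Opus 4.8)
The plan is to transcribe the two proofs of \cite{ja} — one for hypothesis $(1)$, one for hypothesis $(2)$ — and to replace, line by line, each appeal to coprimeness by the corresponding good-action statement. In Jabara's arguments the hypothesis $(|G|,|A|)=1$ is used only in a small number of well-isolated ways: (a) to cover $A$-invariant $r$-subgroups by centralizers, $R=\langle C_R(a):1\neq a\in A\rangle$; (b) to produce $A$-invariant Sylow subgroups of $G$ and of its $A$-invariant sections, together with the fixed-point relation $C_{G/N}(A)=C_G(A)N/N$ for $A$-invariant $N$; and (c) to pass the hypotheses to $A$-invariant subgroups and quotients. Item (a) is the good-action covering supplied by Lemma \ref{lem1}; items (b) and (c) are furnished by Theorem 4.5 and Proposition 2.2 of \cite{egj}. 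With these substitutions in place, every step of \cite{ja} that is \emph{internal} to $G$ — in particular all the commutator-collection estimates that manufacture the class bound — is untouched, since it never again refers to the orders of $G$ and $A$.

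I would organise the write-up around the standard reductions. First, using Theorem 4.5 of \cite{egj} to select $A$-invariant Sylow subgroups and the good-action relation $G=[G,A]C_G(A)$ together with Proposition 2.2 of \cite{egj} to descend to sections, one reduces the determination of the nilpotency class of $G$ to bounding $c(Q)$ for an $A$-invariant Sylow $q$-subgroup $Q$, once $G$ is known to be nilpotent. Nilpotency of $G$ is then read off as in \cite{ja}: under $(1)$, taking $a=b$ shows that each $C_G(a)$ is nilpotent of class at most $m$, and the covering of $G$ by these centralizers, combined with the commuting relations $[C_G(a),C_G(b)]_m=1$ handled through repeated application of the three subgroups lemma (exactly as in the proof of Lemma \ref{lem31}), forces $G$ itself to be nilpotent; under $(2)$ the roles of the centerless hypothesis $Z(A)=1$ and of the square-free exponent are literally those they play in Jabara's argument.

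The delicate point is the covering step, because Lemma \ref{lem1} is stated only for a noncyclic abelian $p$-group acting faithfully on an $r$-group, whereas $A$ here is merely noncyclic of square-free exponent $n$, hence possibly nonabelian and with all its Sylow subgroups cyclic (for instance $A\cong S_3$ under $(2)$). In such cases no noncyclic abelian $p$-subgroup of $A$ is available, so Lemma \ref{lem1} cannot be applied to a $p$-subgroup of $A$ directly; the required covering $R=\langle C_R(a):1\neq a\in A\rangle$ for an $A$-invariant $r$-subgroup $R$ must instead be obtained from the general good-action covering theory of \cite{egj} (Theorem 4.5), with Lemma \ref{lem1} supplying the covering precisely in those prime-by-prime sub-cases in which a noncyclic elementary abelian $p$-section of $A$ does act nontrivially — the square-freeness of $n$ guaranteeing that such $p$-sections are elementary abelian and hence admissible for Lemma \ref{lem1}.

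I expect this localisation to be the main obstacle: one must verify, for every prime divisor $r$ of $|G|$ and after factoring out the kernel of the action on the relevant Sylow $r$-subgroup, that there is enough noncyclic structure left in $A$ to cover $R$ by centralizers, and, under $(2)$, that the hypothesis $Z(A)=1$ is preserved along the reductions so that Jabara's centerless mechanism continues to apply. Granting the covering, the conclusion is routine: the vanishing of the fixed multilinear commutator on centralizer generators propagates, via the three subgroups lemma and induction on the number of centralizers needed to generate $G$ (a number controlled by $n$), to a bound on $c(G)$ depending only on $m$ and $n$.
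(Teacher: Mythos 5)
Your proposal takes essentially the same route as the paper, whose entire proof of this statement is the one-sentence remark that the result ``can easily be obtained by applying the same argument as in \cite{ja} by the use of Lemma~\ref{lem1} and Theorem 4.5 in \cite{egj}'' --- i.e., exactly your plan of transcribing Jabara's two arguments and replacing each appeal to coprimeness by the good-action covering and invariance statements. The subtlety you flag --- that Lemma~\ref{lem1} requires a noncyclic \emph{abelian} $p$-group, which $A$ need not contain when it is nonabelian with cyclic Sylow subgroups (e.g.\ $A\cong S_3$ under hypothesis $(2)$) --- is a genuine point that the paper leaves entirely implicit, but raising it does not make your approach different from the paper's; it only makes it more honest about where the work lies.
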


	\begin{theoremn}
		Let $A$ act on $G$ by automorphisms. Suppose that this action is good and
		that there exists a natural number $m$ such that $[C_G(a), C_G(b)]_{m}=1$
		for all nonidentity elements $a, b\in A$. If $G$ is not nilpotent, then $A$
		has the structure of the complement of some finite Frobenius group.
	\end{theoremn}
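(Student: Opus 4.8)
The plan is to obtain this theorem by contraposition from the preceding theorem, together with the classical structure theory of Frobenius complements, which is exactly the pattern of \cite{ja}. Recall the classical fact that a finite group is (isomorphic to) the complement of a Frobenius group precisely when every one of its subgroups of order $pq$, for primes $p$ and $q$ not necessarily distinct, is cyclic. Thus $A$ fails to have the structure of a Frobenius complement if and only if $A$ contains a noncyclic subgroup $B$ of order $pq$ for some primes $p,q$. I would therefore assume that $A$ is not a Frobenius complement, fix such a $B$, and show that its mere presence already forces $G$ to be nilpotent, contrary to hypothesis.

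The first step is to describe $B$. Since the only noncyclic groups of order $pq$ are $C_p\times C_p$ (when $p=q$) and the nonabelian group of order $pq$ (when $p\neq q$), in either case $B$ is a noncyclic group of square-free exponent. The second step is to verify that the hypotheses of the preceding theorem descend to the action of $B$ on $G$. Good action restricts: the defining equality $H=[H,C]C_H(C)$ for $C$-invariant $H$ is demanded only for subgroups $C$ of $B$, and every such $C$ is a subgroup of $A$, so the equality is inherited from the good action of $A$. The commutator hypothesis restricts likewise, since $[C_G(a),C_G(b)]_m=1$ is assumed for all nonidentity elements of $A$ and in particular for all nonidentity $a,b\in B$. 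Hence the pair $(B,G)$ satisfies condition $(1)$ of the preceding theorem. Note that condition $(1)$, rather than $(2)$, is the one available to us in the case $B\cong C_p\times C_p$, where $Z(B)\neq 1$; the global $m$ lets us use it uniformly in both cases.

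Applying the preceding theorem to the action of $B$ now yields that $G$ is nilpotent, the desired contradiction. Consequently $A$ has no noncyclic subgroup of order $pq$, every subgroup of $A$ of order $pq$ is cyclic, and the cited characterization gives that $A$ is the complement of a finite Frobenius group, as required.

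The reduction itself is purely formal once one observes that both hypotheses pass to subgroups, so the substantive group theory lives entirely in the preceding theorem, whose proof adapts \cite{ja} through Lemma \ref{lem1} and Theorem 4.5 of \cite{egj}. The single point demanding genuine care is the invocation of the equivalence that $A$ is a Frobenius complement if and only if all of its subgroups of order $pq$ are cyclic, in the full generality in which $A$ is stated, i.e.\ without assuming $A$ solvable: one must confirm, via the Zassenhaus classification of Frobenius complements, that the sufficiency direction also captures the exceptional non-solvable complements built from $SL(2,5)$, and not merely the solvable metacyclic ones. I expect this to be the main obstacle.
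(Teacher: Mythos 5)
Your proposal is correct and is essentially the paper's own proof: the paper simply defers to the argument of \cite{ja}, which is exactly this contrapositive reduction — a group that is not a Frobenius complement contains a noncyclic subgroup $B$ of order $pq$, necessarily of square-free exponent, and since both the good-action hypothesis and the commutator hypothesis restrict to $B$, the preceding theorem (case (1)) forces $G$ to be nilpotent. The Zassenhaus characterization of Frobenius complements via cyclic subgroups of order $pq$ that you flag as the delicate point is the same classical input already relied upon in \cite{ja}, so it introduces no new obstacle here.
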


\end{document}